\theoremstyle{plain}
\newtheorem{thm}{Theorem}[section]
\newtheorem{prop}{Proposition}[section]
\newtheorem{lem}[prop]{Lemma}
\newtheorem{cor}[prop]{Corollary}
\newtheorem{defi}[prop]{Definition}
\newtheorem{rmk}[prop]{Remark}
\numberwithin{equation}{section}
\newcommand {\R} {\mathbb{R}} 
 \newcommand {\N} {\mathbb{N}}
\newcommand {\C} {\mathbb{C}} 
\newcommand {\p} {\partial}
\newcommand{\eps}{\epsilon}
\newcommand{\ol}{\overline}
\newcommand{\mR}{\mathbb{R}}                    % Formatting for R
\newcommand{\mC}{\mathbb{C}}                    % Formatting for C
\newcommand{\mZ}{\mathbb{Z}}                    % Formatting for Z
\newcommand{\abs}[1]{\lvert #1 \rvert}          % Formatting for the absolute value
\newcommand{\norm}[1]{\lVert #1 \rVert}         % Formatting for the norm
\newcommand{\re}{\mathrm{Re}}
\newcommand{\im}{\mathrm{Im}}
\DeclareMathOperator{\Id} {Id}
\title[]{Inverse problems for elliptic equations with power type nonlinearities}
\dedicatory{Dedicated to the memory of Yaroslav Kurylev}
\author[Lassas]{Matti Lassas}
\address{Department of Mathematics and Statistics, University of Helsinki}
\curraddr{}
\email{matti.lassas@helsinki.fi}
\author[Liimatainen]{Tony Liimatainen}
\address{Department of Mathematics and Statistics, University of Jyv\"askyl\"a}
\curraddr{}
\email{tony.liimatainen@helsinki.fi}
\author[Lin]{Yi-Hsuan Lin}
\address{Department of Mathematics and Statistics, University of Jyv\"askyl\"a}
\curraddr{}
\email{yihsuanlin3@gmail.com}
\author[Salo]{Mikko Salo}
\address{Department of Mathematics and Statistics, University of Jyv\"askyl\"a}
\curraddr{}
\email{mikko.j.salo@jyu.fi}
\begin{document}
	
	\maketitle

	\begin{abstract}
	%We study inverse problems for non-linear elliptic equations. 
	We introduce a method for solving Calder\'on type inverse problems for semilinear equations with power type nonlinearities. The method is  based on higher order linearizations, and it allows one to solve inverse problems for certain nonlinear equations in cases where the solution for a corresponding linear equation is not known. Assuming the knowledge of a nonlinear Dirichlet-to-Neumann map, we determine both a potential and a conformal manifold simultaneously in dimension $2$, and a potential on transversally anisotropic manifolds in dimensions $n \geq 3$. In the Euclidean case, we show that one can solve the Calder\'on problem for certain semilinear equations in a surprisingly simple way without using complex geometrical optics solutions. 
	%The multiple linearization works for small boundary data.
	
	%the a to  we call mutliple linearization 
    %In this work, we investigate inverse problems for nonlinear elliptic equations on different geometrical settings: 
	%	The Euclidean case, the two-dimensional Riemannian surfaces and conformally transversally anisotropic (CTA) manifolds in three and higher dimensions. By considering sufficiently small Dirichlet data, we introduce a new multiple linearization approach to solve the Calder\'on problem from corresponding boundary measurements.
		
		\medskip
		
		\noindent{\bf Keywords.} Inverse boundary value problem, Calder\'on problem, semilinear equation, Riemannian manifold, transversally anisotropic.
		
		%\noindent{\bf Mathematics Subject Classification (2010)}: 
		
	\end{abstract}

	\tableofcontents

	\section{Introduction}
	In this paper we study inverse boundary value problems for nonlinear elliptic equations. 
	%In the case where the underlying equation is linear, a standard example is the inverse problem of Calderón [13]. This problem is related to Electrical Impedance Tomography, a method proposed for medical and industrial imaging, where the objective is to determine the electrical conductivity of a medium by making voltage to current measurements on its boundary. There is an extensive theory concerning inverse boundary value problems for linear elliptic equations. We refer to [43] for a recent survey.
	A standard example of inverse problems for linear elliptic equations is the problem introduced by Calder\'on~\cite{calderon2006inverse}, where the objective is to determine the electrical conductivity of a medium by making voltage and current measurements on its boundary. It is closely related to the problem of determining an unknown potential $q$ in a Schr\"odinger operator $\Delta +q$ from boundary measurements, first solved in \cite{sylvester1987global} in dimensions $n \geq 3$. There is an extensive theory concerning inverse boundary value problems for linear elliptic equations, and we refer to~\cite{uhlmann2009calderon} for a survey.
	
	%This problem was proposed by the pioneer work of Calder\'on \cite{calderon2006inverse}, and it has been first solved by Sylvester-Uhlmann \cite{sylvester1987global} for dimensions $\geq  3$. 
	It is also natural to consider analogous inverse problems under nonlinear settings. Let $\Omega \subset \mR^n$ be a bounded domain with $C^\infty$ boundary, and consider the reaction-diffusion equation 
	\[
	\p_t w - \Delta w = a(x,w) \text{ in $\Omega \times \{t > 0 \}$}.
	\]
	Equations of this type arise in the modelling of chemical reactions, population dynamics and pattern formation \cite{volpert2014elliptic}. 
	%(Volpert, Elliptic partial differential equations, vol. 2: Reaction-diffusion equations). 
	Examples include the Fisher, Kolmogorov or logistic diffusion equations with quadratic nonlinearity (i.e.\ $a(x,w)$ is quadratic in $w$), the Newell-Whitehead-Segel equation with cubic nonlinearity, and equations in combustion involving polynomial or exponential nonlinearities.
	
	A stationary solution $w(x,t) = u(x)$ satisfies the elliptic equation 
	\[
	\Delta u + a(x,u) = 0 \text{ in $\Omega$}.
	\]
	The Dirichlet problem for this equation is related to maintaining a temperature (or concentration or population) $f$ on the boundary. The boundary measurements for such an equation, provided that it is well-posed for some class of boundary values, may be encoded by a Dirichlet-to-Neumann map (DN map) $\Lambda_q$, which maps the boundary value $f$ to the flux $\Lambda_q(f) = \p_{\nu} u|_{\p \Omega}$ of the corresponding equilibrium state across the boundary.
	
	In fact, inverse problems for nonlinear elliptic equations have also been widely studied. A standard method, introduced in \cite{isakov1993uniqueness} in the parabolic case, is to show that the first linearization of the nonlinear DN map is actually the DN map of a linear equation, and to use the theory of inverse problems for linear equations. For the semilinear Schr\"{o}dinger equation $\Delta u +a(x,u)=0$, the problem of recovering the potential $a(x,u)$ was studied in \cite{isakov1994global, sun2010inverse} in dimensions $n\geq 3$, and in \cite{VictorN, sun2010inverse, imanuvilovyamamoto_semilinear} when $n=2$. In addition, inverse problems have been studied for quasilinear elliptic equations \cite{sun1996, sun1997inverse, kang2002identification, liwang_navierstokes, munozuhlmann}, the degenerate elliptic $p$-Laplace equation \cite{salo2012inverse, branderetal_monotonicity_plaplace}, and the fractional semilinear Schr\"odinger equation \cite{lai2019global}. Certain Calder\'on type inverse problems for quasilinear equations on Riemannian manifolds were recently considered in~\cite{lassas2018poisson}. We refer to the survey articles \cite{sun2005, uhlmann2009calderon} for further details on inverse problems for nonlinear elliptic equations. 
	
	Inverse problems have also been studied for hyperbolic equations with various nonlinearities.  Many of the works mentioned above rely on a solution to a related inverse problem for a linear equation. This is in contrast to the study of inverse problems for nonlinear hyperbolic equations, where it has been realized that the nonlinearity can actually be beneficial in solving inverse problems. 
	
	By using the nonlinearity as a tool, some still unsolved inverse problems for hyperbolic linear equations have been solved for their nonlinear counterparts. For the scalar wave equation with a quadratic nonlinearity, Kurylev-Lassas-Uhlmann \cite{kurylev2018inverse} proved that local measurements determine the global topology, differentiable structure and the conformal class of the metric $g$ on a globally hyperbolic $4$-dimensional Lorentzian manifold. The authors of \cite{lassas2018inverse} studied inverse problems for general semilinear wave equations on Lorentzian manifolds, and in \cite{lassas2017determination} they studied analogous problem for the Einstein-Maxwell equations. For more inverse problems of nonlinear hyperbolic equations, we refer readers to \cite{chen2019detection,de2018nonlinear,kurylev2014einstein,wang2016quadartic} and references there in.
	
	In this work we introduce a method which uses nonlinearity as a tool that helps in solving inverse problems for certain nonlinear elliptic equations. The method is based on \emph{higher order linearizations} of the DN map, and essentially amounts to using sources with several parameters and obtaining new linearized equations after differentiating with respect to these parameters. We demonstrate the scope of the method by solving Calder\'on type problems for three mathematical models. 
	
	The first model is the Calder\'on problem for a semilinear Schr\"odinger equation with quadratic nonlinearity, 
	\begin{equation}\label{usquare}
	 \Delta u + qu^2=0 \text{ in } \Omega\subset \R^n,
	\end{equation}
    where $q \in C^{\infty}(\ol{\Omega})$ and $n\geq 2$. The solution to a related inverse problem with $a(x,u)$ in place of $qu^2$ is known under assumptions like $\p_u a(x,u) \leq 0$~\cite{isakov1994global, VictorN, sun2010inverse}. Theorem \ref{main thm q} proves uniqueness for the nonlinearity $qu^2$, which appears to be a new result. The method applies to more general models, but we begin with the operator~\eqref{usquare} in order to %where the author considered the semilinear elliptic equation$\Delta u+a(x,u)$ with $a_u(x,u)\geq 0$. 
	%We not only include a new proof to 
	introduce our approach in the simplest possible setting. %, but also utilize this technique to study different kind of inverse problems (Theorem \ref{main thm 2D} and Theorem \ref{thm for CTA}). 
	
	The second new result is Theorem \ref{main thm 2D}, where we simultaneously determine the metric, the manifold and the potential up to gauge symmetry from the knowledge of the DN map of a semilinear Schr\"odinger equation on two-dimensional Riemannian surfaces. The analogous result for a linear Schr\"odinger equation is not known in this generality. Here we use nonlinearity to simultaneously determine the topology and the conformal structure of the Riemannian surface, as well as the potential, up to a natural gauge transformation. %In earlier studies, in the case when the manifold and the metric are unknown, one needs to assume that the potential vanishes identically. Also, in previous works where the potential is nonzero, it is assumed that the manifold and its conformal class are a priori known. 
	
	The third result, Theorem \ref{thm for CTA}, is the recovery of the potential $q$ from the knowledge of the DN map of a Schr\"odinger operator with nonlinearity of the form $qu^m$, $m\geq 3$, on  transversally anisotropic manifolds in dimensions $n \geq 3$. Transversally anisotropic manifolds are product type manifolds which appear in several works related to the anisotropic Calder\'on problem. Again, the solution to the analogous inverse problem for a linear equation is not known in this generality. Existing results will be discussed in more detail later in this introduction.
	%to demonstrate our new approach: \emph{multiple linearization}. 
	%As in the case of hyperbolic equations the nonlinearity can be used to help to solve the inverse problem.  
	%The benefit of using multiple linearization is that the nonlinearity helps to solve inverse problems not only for hyperbolic equations but also for elliptic equations. 
	
	Let us introduce  the mathematical setting for this article. We will denote by $(M,g)$ a compact Riemannian manifold with $C^\infty$ boundary $\p M$, where $\dim(M)=n$, $n\geq 2$. For example, one could have $M = \ol{\Omega}$ where $\Omega$ is a bounded $C^{\infty}$ domain in $\R^n$, and $g$ could be the Euclidean metric. Let $q \in C^{\infty}(M)$. We will consider semilinear elliptic equations of the form 
	
	\begin{align}\label{Main semilinear equation}
	\begin{cases}
	\Delta_g u +q u^m =0 & \text{ in }M, \\
	u=f & \text{ on }\p M,
	\end{cases}
	\end{align}
	where 
	\[
	m\in \N \text{ and } m\geq 2.
	\]
 Here $\Delta_g$ is the Laplace-Beltrami operator, given in local coordinates by 
	\[
	\Delta_g u=\frac{1}{\det(g)^{1/2}}\sum_{a,b=1}^n\frac{\p}{\p x_a}\left( \det(g)^{1/2} g^{ab}\frac{\p u}{\p x_b}\right),
	\]
where $g=(g_{ab}(x))$ and $(g^{ab}(x))=g^{-1}$.

	We will show that the Dirichlet problem \eqref{Main semilinear equation} has a unique small solution $u$ for sufficiently small boundary data $f\in C^s(\p M)$, where $s>1$ with $s\notin \N$. More precisely this means that there is $\delta>0$ such that whenever $\norm{f}_{C^s(\p M)}\leq \delta$ , there is a unique solution $u_f$ to \eqref{Main semilinear equation} with sufficiently small $C^s(M)$ norm (see Section \ref{Section 2} for more details on well-posedness). We will call $u_f$ the unique small solution. Here $C^s$ is the standard H\"older space for $s>1$ with $s\notin \N$ (often written as $C^{k,\alpha}$ if $s = k+\alpha$ where $k \in \mZ$ and $0 < \alpha < 1$), see e.g.\ \cite[Section 13.8]{taylor2011partial}. Hence, the DN map is defined by using the unique small solution in a following way:
	\begin{align*}
	\Lambda_{M,g,q}: C^s(\p M)\to C^{s-1}(\p M), \ \  f \mapsto \p_{\nu} u_f|_{\p M},
	\end{align*}
	%where $u_f$ is the unique small solution of. 
	where $\p_{\nu}$ denotes the normal derivative on the boundary $\p M$.
	
	%Different from the nonlinear hyperbolic inverse problems, we do not consider the source-to-solution maps as measurements, but DN maps instead. The inverse problem is to ask whether one can determine the manifold $(M,g)$ and potential $q$ from DN map $\Lambda_{M,g,q}$ or not. The answer is not clear for the linear elliptic operator $-\Delta_g +q $. We will describe fundamental properties for nonlinear elliptic equations in Section \ref{Section 2}. 
	
	As a warm-up, we begin with a theorem that illustrates our method in a simple setting. This theorem is in $\R^n$ for $n\geq 2$, where $\Delta_g$ is the Euclidean Laplacian and $M = \ol{\Omega}$ with $\Omega $ a bounded smooth domain in $\R^n$.
	\begin{thm}[Global uniqueness for a quadratic nonlinearity]\label{main thm q}
		Let $n\geq 2$, and let $\Omega\subset \R^n$ be a bounded domain with $C^\infty$ boundary $\p \Omega$. Let $q_1,q_2\in C^\infty(\ol{\Omega})$. Assume the DN maps $\Lambda_{q_j}$ for the equations 
		\begin{align}\label{simplest_nonlinear}
		\begin{cases}
		  \Delta u + q_ju^2 =0 &\text{ in } \Omega, \\
		u=f  &\text{ on } \p \Omega,
		\end{cases}
		\end{align}
for $j=1,2$ satisfy 
		\[
		\Lambda_{q_1}(f)=\Lambda_{q_2}(f) 
		\]
		for all $f\in C^s(\p \Omega)$ with $\norm{f}_{C^s(\p M)}<\delta$, where $\delta>0$ is any sufficiently small number. Then $q_1=q_2$ in $\Omega$.
	\end{thm}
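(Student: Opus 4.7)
The plan is to exploit the quadratic nonlinearity via a second order linearization of the DN map. Write $f = \epsilon_1 f_1 + \epsilon_2 f_2$ with $f_1, f_2 \in C^s(\partial\Omega)$, let $u_j$ be the unique small solution with $q = q_j$, and expand $u_j$ as a power series in $(\epsilon_1,\epsilon_2)$. The coefficient of $\epsilon_k$ at the origin, call it $v_k$, is independent of $j$ and solves the linear harmonic Dirichlet problem $\Delta v_k = 0$ in $\Omega$, $v_k = f_k$ on $\partial\Omega$, since the nonlinear term vanishes to second order in $u$. Differentiating the PDE $\Delta u_j + q_j u_j^2 = 0$ once in $\epsilon_1$ and once in $\epsilon_2$ and evaluating at $\epsilon_1 = \epsilon_2 = 0$ yields the second linearization
\[
\Delta w^{(j)} + 2 q_j v_1 v_2 = 0 \text{ in } \Omega, \qquad w^{(j)} = 0 \text{ on } \partial\Omega,
\]
where $w^{(j)} = \partial_{\epsilon_1}\partial_{\epsilon_2} u_j \big|_{\epsilon=0}$.

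Next I would use the hypothesis $\Lambda_{q_1} = \Lambda_{q_2}$. Differentiating $\Lambda_{q_j}(\epsilon_1 f_1 + \epsilon_2 f_2) = \partial_\nu u_j|_{\partial\Omega}$ in the same way gives $\partial_\nu w^{(1)} = \partial_\nu w^{(2)}$ on $\partial\Omega$. Setting $w = w^{(1)} - w^{(2)}$, the function $w$ has zero Cauchy data on $\partial\Omega$ and satisfies
\[
\Delta w = -2(q_1 - q_2) v_1 v_2 \text{ in } \Omega.
\]
Pairing against any harmonic function $v_3$ in $\Omega$ and integrating by parts twice, all boundary terms vanish and one obtains the integral identity
\[
\int_\Omega (q_1 - q_2) v_1 v_2 v_3 \, dx = 0
\]
valid for \emph{all} harmonic functions $v_1, v_2, v_3$ in $\Omega$ (the smallness condition on the boundary data $f_k$ can be scaled away by the linearity of the harmonic problem).

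The main obstacle, and the surprising simplification compared with the linear Calder\'on problem, now lies in choosing a sufficiently rich family of harmonic functions to conclude $q_1 \equiv q_2$. Since $\Omega \subset \mR^n$, for every $\rho \in \mC^n$ with $\rho \cdot \rho = 0$ the exponential $e^{\rho \cdot x}$ is a genuine harmonic function; no CGO remainder is needed. The key observation is that having \emph{three} factors instead of two gives enough freedom in the sum $\rho_1 + \rho_2 + \rho_3$: for any $\xi \in \mR^n$ and in any dimension $n \geq 2$, one can choose complex isotropic vectors $\rho_1, \rho_2, \rho_3$ with $\rho_j \cdot \rho_j = 0$ whose sum equals $i\xi$ (e.g.\ take $\rho_1 = 0$ and split $i\xi = \rho_2 + \rho_3$ into two isotropic summands). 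Substituting $v_k = e^{\rho_k \cdot x}$ into the integral identity yields $\widehat{(q_1 - q_2)\chi_\Omega}(\xi) = 0$ for every $\xi \in \mR^n$, where $q_1 - q_2$ is extended by zero outside $\Omega$. By Fourier uniqueness this forces $q_1 = q_2$ in $\Omega$, completing the proof.
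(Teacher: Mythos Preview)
Your proposal is correct and follows essentially the same approach as the paper: second order linearization at $\epsilon=0$, matching of Neumann data for $w^{(j)}$, integration by parts against a harmonic test function, and then Calder\'on-type exponentials to recover the Fourier transform of $q_1-q_2$. The only cosmetic difference is that the paper integrates the equation for $w^{(1)}-w^{(2)}$ directly over $\Omega$ (i.e.\ takes $v_3\equiv 1$ from the outset) and then uses the two exponentials $e^{(\pm k+i\xi)\cdot x}$, which is exactly your choice $\rho_1=0$, $\rho_2+\rho_3=i\xi$ up to relabeling.
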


%\footnote{
%	The formulation of Theorem \ref{main thm q} has not yet been studied in the literature.
%	{\color{blue}( need to check again, at least I did not find yet)} 
%	In \cite{sun2010inverse}, the author studied the Calder\'on problem for semilinear elliptic equations of the form $\Delta u + a(x,u)$, where $a(x,u)$ is sufficiently smooth and $a_u(x,u)\leq 0$. However, in Theorem \ref{main thm q}, we cannot assume this sign condition on the lower order term, which is different from the result in \cite{sun2010inverse}. TODO: Merge the content of this footnote in the intro.}
	
	%In this work, we do not pursue the optimal regularity of coefficients, instead, we want to emphasize that our approach is useful to study related inverse problems of nonlinear elliptic equations.
	We will offer a detailed proof of Theorem \ref{main thm q} in Section~\ref{Section 3}, but let us briefly discuss the idea how to prove the theorem by using the method of \emph{higher order linearization}. The second order linearization of the nonlinear DN map has already been used in the works \cite{sun1996, sun1997inverse} related to nonlinear equations with matrix coefficients. First and second order linearizations were also used in in \cite{kang2002identification} for a nonlinear conductivity equation (see also \cite{carsteanakamuravashisth}). Under certain assumptions on the nonlinearity, by using the second order linearization, they can recover quadratic parts of the nonlinearity (see \cite[Theorem 1.2 and Theorem 1.3]{kang2002identification}). In this work, we use similar ideas but obtain interesting new phenomena for related nonlinear inverse problems.
	
	For the equation \eqref{simplest_nonlinear} with quadratic nonlinearity, the first order linearization of the nonlinear DN map $\Lambda_q$, linearized at the zero boundary value, is just the DN map for the standard Laplace equation:
	\[
	(D\Lambda_q)_0: C^s(\p \Omega) \to C^{s-1}(\p \Omega), \ \ f \mapsto \p_{\nu} v_f|_{\p \Omega},
	\]
	where $v_f$ is the unique solution of $\Delta v_f = 0$ in $\Omega$ with $v_f|_{\p \Omega} = f$. Thus the first linearization does not carry any information about the unknown potential $q$. However, for a quadratic nonlinearity the \emph{second linearization} $(D^2 \Lambda_q)_0$, which is a symmetric bilinear map from $C^s(\p M) \times C^s(\p M)$ to $C^{s-1}(\p M)$, turns out to be very useful: it is characterized by the identity (see \eqref{dm_lambdaq_identity})  
	\[
	\int_{\p \Omega} (D^2 \Lambda_q)_0(f_1, f_2) f_3 \,dS = -2 \int_{\Omega} q v_{f_1} v_{f_2} v_{f_3} \,dx
	\]
	where $v_{f_j}$ is the harmonic function with boundary value $f_j$. Thus we have the implications 
	\begin{align*}
	 &\Lambda_{q_1}(f) = \Lambda_{q_2}(f) \text{ for small $f$} \\
	 \implies &(D^2 \Lambda_{q_1})_0 = (D^2 \Lambda_{q_2})_0 \\
	 \implies &\int_{\Omega} (q_1-q_2) v_1 v_2 v_3 \,dx = 0
	\end{align*}
	for any functions $v_1, v_2, v_3 \in C^s(\ol{\Omega})$ that are harmonic in $\Omega$.
	
	The last statement is very close to the linearized Calder\'on problem for a \emph{linear} Schr\"odinger equation (the difference is that here one has the product of three harmonic functions, instead of two). Choosing $v_1$ and $v_2$ to be harmonic exponentials as in the work of Calder\'on \cite{calderon2006inverse}, and choosing $v_3 \equiv 1$, shows that the Fourier transform of $q_1-q_2$ vanishes and hence $q_1 = q_2$. Thus, somewhat strikingly, we can solve a Calder\'on type inverse problem for the nonlinear equation $\Delta u + qu^2 = 0$ in a much simpler way than for the linear equation $\Delta u + qu=0$ (the latter requires complex geometrical optics solutions as in  \cite{sylvester1987global}). The method also provides extremely simple reconstruction of the potential $q$, see Corollary~\ref{cor main thm q}.
	
	We also mention that the second order linearization can be described as 
	\[
	(D^2 \Lambda_q)_0(f_1, f_2) = \p_{\eps_1} \p_{\eps_2} u_{\eps_1 f_1 + \eps_2 f_2}|_{\eps_1=\eps_2=0}.
	\]
	That is, one considers boundary data 
%	The higher order linearization is behind all the proofs of the theorems in this paper, and let us explain the ideas as follows. We consider the equation~\eqref{simplest_nonlinear} for two potentials $q_1$ and $q_2$ and assume that the DN maps of the corresponding Dirichlet problems agree. Let 
	\[
	f=\eps_1 f_1 + \eps_2 f_2\in C^s(\p \Omega),
	\]
	where $\eps_1,\eps_2$ are sufficiently small parameters, and takes the mixed derivative %, and denote $u_j(x;\eps_1,\eps_2)$ to be the corresponding small solutions for $j=1,2$.. %Differentiating the equation~\eqref{example_eq} with respect to $\eps_1$ and $\eps_2$ at $\eps_1=\eps_2=0$ yields the equations
	%\[
	% \Delta v_k =0 \text{ on } M, \quad v_k|_{\p \Omega}=f_k.
	%\]
	%The functions $v_k$, $k=1,2$, are harmonic functions independent of $q_j$.
\[
                                              \left.\frac{\p}{\p \eps_1}\frac{\p}{\p \eps_2}\right|_{\eps_1=\eps_2=0}
\]
%\left.\frac{\p}{\p \eps_1}\frac{\p}{\p \eps_2}\right|_{\eps_1=\eps_2=0}$ 
of the equation~\eqref{simplest_nonlinear}. This idea is similar to the recent works on inverse problems for nonlinear hyperbolic equations mentioned above, and it yields the equations 
    \begin{equation}\label{second_deriv}
     \Delta w_j = -2q_j v_{f_1} v_{f_2},
    \end{equation}
    for $j=1,2$, where $w_j =   \left.\frac{\p}{\p \eps_1}\frac{\p}{\p \eps_2}\right|_{\eps_1=\eps_2=0} u_j$ and $v_{f_j}$ are harmonic functions, i.e.\ solutions to the linearized equation $\Delta v = 0$. %The linearized equation is just the \emph{Laplace equation}, i.e., $v_1$ and $v_2$ are harmonic functions with boundary value $f_1$ and $f_2$, respectively.
    Taking the mixed derivative of the DN maps yields (see Section \ref{Section 2})
    \begin{equation*}\label{normal_derivs_agree}
     \p_\nu w_1=\p_\nu w_2 \text{ on } \p \Omega.
    \end{equation*}
    Subtracting the equations~\eqref{second_deriv} for $j=1,2$ and integrating the resulting equation against the harmonic function $v_{f_3}$ yields the desired formula 
    \[
    \int_{\Omega} (q_1-q_2) v_{f_1} v_{f_2} v_{f_3} \,dx = 0
    \]
    which was mentioned in the discussion above.

We move on to describe our next result. By using higher order linearizations we prove the following \emph{simultaneous recovery} on a two-dimensional Riemannian surface. %The analogous inverse problem for the linear equation is not solved. %This displays how our new method uses the nonlinearity as a tool. %enable us to solve related inverse problems.

	\begin{thm}[Simultaneous recovery  of metric and potential]\label{main thm 2D}
	Let  $(M_1,g_1)$ and $(M_2,g_2)$ be two compact connected manifolds with mutual $C^\infty$ boundary $\p M$, where $\dim (M_1)=\dim (M_2) =2$ and $m\geq 2$. Let $\Lambda_{M_j,g_j,q_j}$ be the DN maps of 
	\begin{align}\label{2Deq}
	\Delta_{g_j} u +q_j u^m=0  \text{ in }M_j
	\end{align}
	for $j=1,2$.
	%~\footnote{Make ``small data'' DN map precise in the intro.}
	Let $s>1$ with $s \notin \N$ and assume that 
	$$\Lambda_{M_1,g_1,q_1} f =\Lambda_{M_2,g_2,q_2} f \text{ on }\p M, $$ for any $f\in C^s(\p M)$ with $\norm{f}_{C^s(\p M)}\leq \delta$, where $\delta >0$ is sufficiently small. Then:
	\begin{itemize}
		\item[(1)] There exists a conformal diffeomorphism $J:M_1\to M_2$ and a positive smooth function $\sigma$ such that 
		\begin{equation*}\label{J_and_sigma}
		\sigma J^*g_2 = g_1, \text{ with }J|_{\p M}=\Id \text{ and }\sigma|_{\p M} =1.
		\end{equation*}
		
		\item[(2)] Moreover, one can also recover the potential up to a natural gauge invariance in the sense that
		\[
		\sigma q_1 = q_2 \circ J \text{ in }M_1.
		\]
	\end{itemize}
	\end{thm}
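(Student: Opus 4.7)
The plan is to combine the first-order linearization (to recover the conformal class and settle part (1)) with the $m$-th order linearization (to recover the potential up to gauge and settle part (2)). Since $m \geq 2$, the nonlinearity $qu^m$ vanishes to order $m$ at $u=0$, so differentiating the Dirichlet problem \eqref{2Deq} once at $u=0$ simply yields the linear Laplace--Beltrami equation $\Delta_{g_j} v = 0$. The first derivative $(D\Lambda_{M_j,g_j,q_j})_0$ is therefore the DN map of $\Delta_{g_j}$, and the two Laplace DN maps agree. Invoking the two-dimensional Calder\'on theorem for Riemannian surfaces (Lassas--Uhlmann) produces a diffeomorphism $J : M_1 \to M_2$ with $J|_{\p M} = \Id$ and $J^* g_2$ conformal to $g_1$; defining $\sigma$ by $\sigma J^*g_2 = g_1$ gives (1), while standard boundary determination of the Laplace DN map forces the two boundary metrics to coincide, whence $\sigma|_{\p M} = 1$.

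For (2), I apply the mixed derivative $\p_{\eps_1}\cdots\p_{\eps_m}|_{\eps=0}$ to \eqref{2Deq} with boundary data $f = \sum_{k=1}^m \eps_k f_k$. Writing $u^{(j)} = \sum_k \eps_k v_k^{(j)} + O(|\eps|^2)$, where $v_k^{(j)}$ is the $g_j$-harmonic extension of $f_k$, the sole contribution at order $\eps_1 \cdots \eps_m$ from $qu^m$ is $m!\, q_j v_1^{(j)} \cdots v_m^{(j)}$. Thus $w_j := \p_{\eps_1}\cdots\p_{\eps_m}|_0 u^{(j)}$ satisfies
\[
\Delta_{g_j} w_j = -m!\, q_j\, v_1^{(j)} \cdots v_m^{(j)} \text{ in } M_j, \qquad w_j|_{\p M} = 0,
\]
and the DN-map assumption (together with the boundary agreement $g_1|_{\p M} = g_2|_{\p M}$) gives $\p_\nu^{g_1} w_1 = \p_\nu^{g_2} w_2$. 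Testing against a $g_j$-harmonic $v_{m+1}^{(j)}$ and using Green's identity, the boundary contributions on both sides are identical, yielding
\[
\int_{M_1} q_1\, v_1^{(1)} \cdots v_{m+1}^{(1)}\, dV_{g_1} = \int_{M_2} q_2\, v_1^{(2)} \cdots v_{m+1}^{(2)}\, dV_{g_2}.
\]

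To merge the two sides I use the two-dimensional conformal invariance of the Laplacian: since $\sigma J^* g_2 = g_1$, the pullback $J^* v_k^{(2)}$ is $g_1$-harmonic with boundary data $f_k$, and therefore coincides with $v_k^{(1)}$ by uniqueness of the Dirichlet problem. Likewise $J^*dV_{g_2} = dV_{\sigma^{-1} g_1} = \sigma^{-1} dV_{g_1}$. Changing variables on the right-hand side above produces
\[
\int_{M_1} \bigl( q_1 - \sigma^{-1} (q_2 \circ J)\bigr)\, v_1 \cdots v_{m+1}\, dV_{g_1} = 0
\]
for all $g_1$-harmonic $v_k$ with sufficiently small boundary data, and then for all $g_1$-harmonic functions by multilinearity.

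The main obstacle is the density step: concluding that $h := q_1 - \sigma^{-1}(q_2 \circ J) \in C^\infty(M_1)$ must vanish identically. This reduces to showing that products $v_1\cdots v_{m+1}$ of $m+1 \geq 3$ harmonic functions are $L^1(M_1)$-dense on the Riemann surface $(M_1,g_1)$. The cleanest route is to set $v_3 = \cdots = v_{m+1} \equiv 1$ (constants are harmonic) and invoke the density of products of \emph{two} harmonic functions that underlies the 2D linear Calder\'on problem on surfaces (Bukhgeim's method, extended to Riemann surfaces by Guillarmou--Tzou). Alternatively, one can directly construct complex geometrical optics harmonic functions on $(M_1,g_1)$ using holomorphic Morse phases and localize $h$ at an arbitrary interior point via stationary phase; the extra freedom of having $m+1 \geq 3$ factors makes this particularly comfortable. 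Either approach forces $h \equiv 0$ on $M_1$, i.e., $\sigma q_1 = q_2 \circ J$, completing (2).
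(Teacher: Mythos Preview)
Your proposal is correct and follows essentially the same strategy as the paper: use the first linearization together with the Lassas--Uhlmann result to recover the conformal class, then the $m$-th linearization together with the Guillarmou--Tzou density of products of two harmonic functions (after setting the extra $m-1$ harmonic factors equal to $1$) to recover the potential up to the gauge $\sigma q_1 = q_2\circ J$. The only organizational difference is that the paper first pulls back the full nonlinear problem on $(M_2,g_2)$ through $J$ to an equation $\Delta_{g_1}\tilde u_2 + \tilde q_2 (\tilde u_2)^m=0$ on $M_1$ with $\tilde q_2=\sigma^{-1}(q_2\circ J)$ and then applies Proposition~\ref{derivs_and_integral_formula} on the single manifold $M_1$, whereas you linearize on each manifold separately and pull back the resulting integral identity; both routes yield the same identity $\int_{M_1}(q_1-\sigma^{-1}(q_2\circ J))v_1\cdots v_{m+1}\,dV_{g_1}=0$.
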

	
	We see that the conformal factor $\sigma$ (and also the diffeomorphism $J$) couples to the potential. This is due to the \emph{gauge symmetry} of the inverse problem: %~\footnote{how to write the following correctly? TODO}
	\[
	 \Lambda_{M_1,\sigma J^*g,\sigma^{-1}J^*q}=\Lambda_{J(M_1),g,q}
	\]
	where $J$ and $\sigma$ satisfy boundary conditions as above.
        For the linear equation $\Delta_g u + qu = 0$, an analogous result has been proved when $M$ is a domain in $\mR^2$ with a Riemannian metric \cite{iuy_general}, when $M$ is a manifold and the potentials are zero \cite{lassas2001determining}, and when the manifold $M$ is a priori known \cite{guillarmou2011calderon}. The recovery of properties of both the manifold and potential is stated as an open question in \cite{guillarmoutzou2013_survey}, where further references to two-dimensional results are given. The proof of Theorem \ref{main thm 2D} uses the first linearization of the DN map to recover the metric and the manifold up to a conformal transformation. Then the second linearization is used to recover the potential on a single fixed manifold (up to the gauge symmetry). %Morever, A similar problem of the linear case on a two-dimensional Riemannian manifolds is not clear. In other words, by knowing the DN map $\Lambda_{g,q}$, whether we can recover $g$ and $q$ for the equation $\Delta_g u+ qu=0$ in $M$ with $\dim M= 2$ or not. On the other hand, the proof of Theorem \ref{main thm 2D} uses first order linearization to recover the metric and the manifold up to a conformal transformation. Then the higher order linearization method is used to recover the potential on a single fixed manifold (up to the gauge symmetry).

%\begin{rmk}
%	Note that for the linear Schr\"odinger equation, one cannot recover both metric $g$ and potential $q$ simultaneously.	Moreover, in \cite{guillarmou2011calderon}, the authors assumed that the potential $q$ lies in the H\"older space $C^{1,\alpha}(M)$ for some $\alpha \in (0,1)$. For the nonlinear problem, we only need to assume $q\in L^\infty(M)$. The nonlinearity helps us to solve related inverse problems.
%\end{rmk}
	
	The final new result in this article is to consider inverse problems for the semilinear Schr\"odinger equation on \emph{transversally anisotropic} manifold. Let us recall the definition of a transversally anisotropic manifold.
	\begin{defi}
		Let $(M,g)$ be a compact oriented manifold with a $C^\infty$ boundary and with $\dim M\geq 3$.
		%\begin{itemize}
			%\item[(1)] 
			$(M,g)$ is called \emph{transversally anisotropic} if $(M,g)\subset \subset  (T,g)$, where $T=\R \times M_0$ and $g=e\oplus g_0$. Here $(\R, e)$ is the Euclidean line and $(M_0, g_0)$ is $(n-1)$-dimensional compact manifold with a smooth boundary.
			
		%	\item[(2)] $(M,g)$ is called conformally transversally anisotropic (CTA) if $(M,cg)$ is transversally anisotropic for some smooth positive function $c: M\to \R_+$.
			
	%	\end{itemize}
	\end{defi}
	\noindent For more details of inverse problems in transversally anisotropic geometries for linear equations, we refer readers to \cite{ferreira2013calderon,ferreira2017linearized}. 
	
	%The theorem second main result of this work in the CTA manifold. We consider the semilinear elliptic equation of the form \eqref{Main semilinear equation} on CTA Riemannian manifolds $(M,g)$, where the potential $q$ is in $C^\infty(M)$. 
	%A manifold is conformally transversally anisotropic means that the manifold is a compact subset of $\R\times M_0$ and the metric is of the form
	%\[
	%g=c(e\oplus g_0).
	%\]
	%We refer to~\cite{} for details on CTA manifolds.
	%We will show that~\eqref{Main semilinear equation} has unique solutions for small enough Dirichlet boundary values $f$ in Section \ref{Section 2}.
	We prove the following.
	\begin{thm}\label{thm for CTA}
		Let $(M,g)$ be a transversally anisotropic manifold, let $q_j \in C^{\infty}(M)$, and let $\Lambda_{q_j}$ be the DN maps for the equations 
		\[
		\Delta_g u + q_j u^m =0 \text{ in }M
		\]
		for $j=1,2$, where we assume that 
		\[
		m\in \N, \quad m\geq 3. 
		\]
		If the DN maps satisfy 
		\[
		\Lambda_{q_1}(f) =\Lambda_{q_2}(f)
		\]
		for all sufficiently small $f$, then $q_1=q_2$ in $M$.
	\end{thm}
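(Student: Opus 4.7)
The plan is to mirror the second-order linearization argument from the proof of Theorem~\ref{main thm q} at order $m$, and then to combine the resulting integral identity with complex geometrical optics (CGO) solutions adapted to the transversally anisotropic structure.

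First, I would apply the mixed derivative $\p_{\eps_1}\cdots\p_{\eps_m}|_{\eps=0}$ to both sides of the equation $\Delta_g u^\eps_j + q_j (u^\eps_j)^m = 0$, where $u^\eps_j$ is the unique small solution with boundary data $f = \eps_1 f_1 + \cdots + \eps_m f_m$. Expanding $u^\eps_j = \sum_k \eps_k v_k + O(|\eps|^2)$ with each $v_k$ harmonic and $v_k|_{\p M} = f_k$, the Leibniz rule forces every $\eps_k$ onto a distinct first-order factor, giving $\p_{\eps_1}\cdots\p_{\eps_m}(u^\eps_j)^m|_{\eps=0} = m!\, v_1 v_2 \cdots v_m$. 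Hence $w_j := \p_{\eps_1}\cdots\p_{\eps_m} u^\eps_j|_{\eps=0}$ solves
\[
\Delta_g w_j = -m!\, q_j\, v_1 v_2 \cdots v_m \text{ in } M, \qquad w_j|_{\p M} = 0,
\]
while the equality of DN maps gives $\p_\nu w_1 = \p_\nu w_2$ on $\p M$. Multiplying the difference equation by an additional harmonic function $v_{m+1}$ and integrating by parts yields the key identity
\[
\int_M (q_1 - q_2) \, v_1 v_2 \cdots v_{m+1} \, dV_g = 0
\]
for all harmonic $v_1, \ldots, v_{m+1}$ on $M$.

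Next, I would construct a family of harmonic CGOs on $M \subset\subset (\R \times M_0, e \oplus g_0)$ by complex separation of variables. Seeking $v = e^{-sx_1}(a(x') + r_s(x))$ with $s = \tau + i\lambda \in \C$, harmonicity reduces at leading order to the Helmholtz-type equation $(-\Delta_{g_0} + s^2)a = 0$ on $M_0$, with remainder $r_s$ controlled by the standard Carleman estimate for $\Delta_g$ in the weight $e^{\tau x_1}$, so that $\norm{r_s}_{L^2} = O(|s|^{-1})$. The profile $a$ can be taken as a Gaussian beam quasi-mode concentrated along a non-tangential geodesic in $M_0$, following the transversally anisotropic CGO constructions of Dos Santos Ferreira--Kenig--Salo--Uhlmann.

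Finally, I would exploit the fact that $m + 1 \geq 4$ harmonic factors are available in the identity. Pairing $v_1 = e^{-(\tau + i\alpha)x_1}(a_1 + r_1)$ with $v_2 = e^{(\tau + i\beta)x_1}(a_2 + r_2)$ produces $v_1 v_2 = e^{i(\beta - \alpha)x_1}(a_1 a_2 + o(1))$ as $\tau \to \infty$. Taking two such pairs for $v_1, \ldots, v_4$ and $v_5 = \cdots = v_{m+1} \equiv 1$, the limiting identity encodes the Fourier transform in $x_1$ of $q_1 - q_2$ tested against Gaussian-beam products on $M_0$. Varying the Fourier frequency (through the imaginary parts of the $s$-parameters) and the underlying geodesic forces the Fourier transform of $q_1 - q_2$ in $x_1$ to vanish, yielding $q_1 = q_2$. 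The main technical obstacle is this last density step, where one must verify that quadruple products of Gaussian-beam profiles on the transversal manifold span enough of $L^1(M_0)$ to determine a function from its pairings with them; the extra factor available when $m \geq 3$, compared with only two factors in the linear Calder\'on problem, is precisely what makes the density manageable on a general transversally anisotropic $M_0$.
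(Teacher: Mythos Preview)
Your linearization step is correct and matches the paper: the identity $\int_M (q_1-q_2)\,v_1\cdots v_{m+1}\,dV_g=0$ for harmonic $v_j$ is exactly what is needed, and you are right that one may take $v_5=\cdots=v_{m+1}\equiv 1$ and work with four CGO factors.

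The genuine gap is in your final density step. You speak of ``varying the underlying geodesic'' and of ``quadruple products of Gaussian-beam profiles'' spanning enough of $L^1(M_0)$, but as written this does not escape the geodesic X-ray transform. If all four factors $v_1,\ldots,v_4$ are Gaussian beams along the \emph{same} geodesic $\gamma$ in $M_0$, then the product $|a_1|^2|a_2|^2$ still concentrates on $\gamma$, and the limiting identity only yields integrals of $\hat f(2\lambda,\cdot)$ along $\gamma$. Recovering $f$ from such data is precisely the injectivity of the geodesic X-ray transform on $(M_0,g_0)$, which is \emph{not} assumed in the theorem and is open for general transversal manifolds. Your sentence ``the extra factor available when $m\geq 3$ \ldots\ is precisely what makes the density manageable'' is true, but you have not said how.

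The missing idea is to place the two conjugate pairs along two \emph{distinct intersecting} geodesics: $u_1,u_2$ along $\gamma$ and $u_3,u_4$ along $\eta\neq\gamma$ through a common point $y_0\in M_0$. Then $|\widetilde v|^2|\widetilde w|^2$ is supported near $\gamma\cap\eta$, and because $\mathrm{Im}(\nabla^2\Phi)$ and $\mathrm{Im}(\nabla^2\Psi)$ are positive definite transversally to $\gamma$ and $\eta$ respectively, their sum has positive definite imaginary Hessian at $y_0$. Stationary phase therefore localizes the integral to the \emph{point} $y_0$ and yields $\hat f(2\lambda,y_0)$ directly, with no X-ray transform involved. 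The paper then has to do substantial additional work---introducing a large parameter $L$ in one of the pairs and invoking a Paley--Wiener type lemma---to separate the contributions from the finitely many intersection points when $\gamma$ and $\eta$ self-intersect or meet more than once; this is not a detail you could fill in routinely.
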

	
	The higher order linearization method in this case reduces the proof of Theorem~\ref{thm for CTA} to showing for any $m \geq 3$ that the identity  
	\begin{equation}\label{mprod}
	\int_M f v_1 \cdots v_{m+1} \,dV = 0
	\end{equation}
	holding for any $v_j \in C^{\infty}(M)$ with $\Delta_g v_j = 0$ in $M$, implies $f \equiv 0$. Thus we prove that the products of at least four harmonic functions on a transversally anisotropic manifold form a complete set. The main point is that the argument works for arbitrary transversally anisotropic manifolds without any restriction on the transversal geometry.
	
	The solution to the analogous inverse problem for a linear equation $\Delta_g u + qu = 0$ on transversally anisotropic manifolds is only known under the additional assumption that the transversal manifold $(M_0,g_0)$ has injective geodesic X-ray transform~\cite{ferreira2013calderon}. In the linearized version of that problem, the identity~\eqref{mprod} only holds for $m=1$ and one needs to prove that products of pairs of harmonic functions form a complete set. In~\cite{ferreira2013calderon} this is done by using complex geometrical optics solutions that concentrate near two-dimensional surfaces that are translates of geodesics on $M_0$. Using products of such solutions and their complex conjugates recovers certain integrals over geodesics in $M_0$, but does not yield pointwise information. In~\cite{ferreira2017linearized} products of solutions concentrating near two intersecting geodesics were used instead to recover microlocal information in the linearized inverse problem. The products are supported near finitely many points in $M_0$, but there is oscillation that prevents recovering more information. We also mention \cite{guillarmousalotzou_complex} that deals with the linearized problem on certain complex manifolds.
	
	The idea behind the proof of Theorem \ref{thm for CTA} is that since one can use products of at least four harmonic functions, we can use solutions related to two intersecting geodesics on $M_0$ as well as their complex conjugates. The product of these four solutions is supported near finitely many points in $M_0$ and the product does not have high oscillations. This allows one to recover the potential completely.
	
	We mention that the aim of this paper is not to work in the highest possible generality or to provide an extensive list of all possible applications of the higher order linearization method. For example, it is clear that the method applies to certain more general nonlinearities and less regular coefficients. These are left to forthcoming works. Here we have included applications that illustrate the power of the higher order linearization method.

	Finally, we mention that before submitting this paper we became aware of an upcoming preprint of Ali Feizmohammadi and Lauri Oksanen, which simultaneously and independently proves a result similar to Theorem \ref{thm for CTA}, and we agreed with them to publish the preprints of the results at the same time on the same preprint server.
	
	The paper is organized as follows. In Section \ref{Section 2}, we lay out the basic properties for semilinear elliptic equations that we use. This includes the well-posedness of the Dirichlet problem and higher order linearizations of the DN map. %for small data. 
	We use the higher order linearization approach to prove Theorem \ref{main thm q} in Section \ref{Section 3}, Theorem \ref{main thm 2D} in Section \ref{Section 4}, and Theorem \ref{thm for CTA} in Section \ref{Section 5}, respectively. %by using in Section \ref{Section 3}. In Section \ref{Section 4}, the nonlinearity enables us to recover the metric $g$ and the potential $q$ for the nonlinear elliptic equation \eqref{Main semilinear equation} conformally on two-dimensional Riemannian surfaces. Finally, we prove the global uniqueness holds for the potential $q$ on CTA manifolds in Section \ref{Section 5}.
	
\vspace{10pt}

\noindent {\bf Acknowledgements.}
All authors were supported by the Finnish Centre of Excellence in Inverse Modelling and Imaging (Academy of Finland grant 284715). M.S.\ was also supported by the Academy of Finland (grant 309963) and by the European Research Council under Horizon 2020 (ERC CoG 770924).

\section{Preliminaries}\label{Section 2}
	
In this section, we prove well-posedness of the Dirichlet problem for semilinear elliptic equations with small boundary data, and study higher order linearizations of the DN map. We state the first result for a general nonlinearity satisfying two conditions: the first ensures that $u \equiv 0$ is a solution, and the second states that the equation linearized at $u \equiv 0$ is well-posed. %Let us start with the well-posedness.
%Let us define the following nonlinearity used in this paper, which was also introduced in \cite[Definition 1.1]{lassas2018inverse}.

%\begin{defi}
%	Let $H(x,z)\in C^\infty(\overline{M},\R)$ be a real-valued function. 
	%where $\Omega$ is open in $M$ and $I$ is a small neighborhood in $\R$ or in $\C$. We call $H$ is \emph{genuinely nonlinear} in $z\in \R$ provided that $H(x,0)=\p_z H(x,0)=0$ for any $x\in \overline{M}$ and there is an integer $k\geq 2$ such that 
%	$$
%	\p_z^k H(x,0)\neq 0.
%	$$
%\end{defi}

%We recommend readers unfamiliar with the technique to think about the simplest nontrivial case of operator $\Delta_gu+q(x)u^2$ (i.e. $H(x,u)=q(x)u^2$) while reading the proofs. 

\begin{prop}[Well-posedness]\label{wellposedness_and_expansion}
	Let $(M,g)$ be a compact Riemannian manifold with $C^\infty$ boundary $\p M$ and let $Q$ be the semilinear elliptic operator 
	\[
	Q(u):= \Delta_g u + a(x,u),
	\]
	where $a \in C^{\infty}(M \times \mR)$ satisfies the following two conditions:
	\begin{gather}
	a(x,0) = 0, \label{a_first_condition} \\
	\text{The map $v \mapsto \Delta_g v + \p_u a(\,\cdot\,,0)v$ is injective on $H^1_0(M)$.} \label{a_second_condition}
	\end{gather}
    Let $s > 1$ with $s \notin \mZ$.
	There exist $\delta, C > 0$ such that for any $f$ in the set 
	\[
	U_{\delta} := \{ h \in C^s(\p M) \,;\, \norm{h}_{C^s(\p M)}<\delta \},
	\]
    there is a solution $u = u_f$ of 
	\begin{align}\label{solvability}
	\begin{cases}
	\Delta_g u+a(x,u)=0 & \text{ in } M,\\
	u= f & \text{ on } \p M,
	\end{cases}
	\end{align}
	which satisfies
	\begin{align*}\label{Cont_of_sols}
	\norm{u}_{C^s(M)}\leq C \norm{f}_{C^s(\p M)}.
	\end{align*}
	The solution $u_f$ is unique within the class $\{ w \in C^s(M) \,;\, \norm{w}_{C^s(M)} \leq C \delta \}$, and if $f \in C^{\infty}(\p M)$, then $u_f \in C^{\infty}(M)$. Moreover, there are $C^{\infty}$ maps 
	\begin{align*}
	\begin{array}{rll}
	S:& \!\!\!U_{\delta} \to C^s(M), \ & f \mapsto u_f, \\
	\Lambda:& \!\!\!U_{\delta} \to C^{s-1}(\p M), \ &f \mapsto \p_{\nu} u_f|_{\p M}.
	\end{array}
	\end{align*}
\end{prop}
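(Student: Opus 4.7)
The strategy is the Banach-space implicit function theorem. I would introduce
\[
F\colon C^s(M)\times C^s(\p M)\to C^{s-2}(M)\times C^s(\p M),\qquad F(u,f)=\bigl(\Delta_g u+a(\cdot,u),\ u|_{\p M}-f\bigr),
\]
so that solutions of \eqref{solvability} correspond exactly to zeros of $F$. Condition \eqref{a_first_condition} gives $F(0,0)=0$, and differentiating in $u$ at the origin yields
\[
D_u F(0,0)\,v=\bigl(L_0 v,\ v|_{\p M}\bigr),\qquad L_0 v:=\Delta_g v+\p_u a(\cdot,0)\,v.
\]

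The key analytic step is to verify that $D_u F(0,0)$ is a Banach-space isomorphism. Since $L_0$ is elliptic with smooth coefficients, $L_0\colon H^1_0(M)\to H^{-1}(M)$ is Fredholm of index zero and self-adjoint, and assumption \eqref{a_second_condition} rules out a kernel, so it is an isomorphism. Standard Schauder theory for the Dirichlet problem, combined with a boundary-data lifting, then upgrades this to the isomorphism
\[
(L_0,\ \mathrm{tr}_{\p M})\colon C^s(M)\longrightarrow C^{s-2}(M)\times C^s(\p M),
\]
where $C^{s-2}$ is to be interpreted in the Zygmund/Besov sense when $s<2$, as in \cite[Section 13.8]{taylor2011partial}. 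This is the step I expect to require the most care, since $s>1$ with $s\notin\N$ allows $s-2$ to be negative.

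Smoothness of $F$ reduces to smoothness of the Nemytskii map $u\mapsto a(\cdot,u)$; since $s>1$ makes $C^s(M)$ a Banach algebra and $a\in C^\infty(M\times\mR)$, this map is $C^\infty$ as a nonlinear operator on $C^s(M)$ (a standard composition fact). Applying the implicit function theorem produces $\delta>0$ and a unique $C^\infty$ solution map $S\colon U_\delta\to C^s(M)$ with $S(0)=0$; the estimate $\norm{u_f}_{C^s(M)}\le C\norm{f}_{C^s(\p M)}$ follows by Taylor expansion of $S$ at $0$, and uniqueness of $u_f$ inside a small $C^s$-ball is built into the IFT conclusion. If $f\in C^\infty$, then once $u_f\in C^s$ the right-hand side of $\Delta_g u_f=-a(\cdot,u_f)$ lies in $C^s$, so Schauder regularity gives $u_f\in C^{s+2}$ and iteration yields $C^\infty$. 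Finally, the DN map $\Lambda$ is the composition of $S$ with the bounded linear normal trace $C^s(M)\to C^{s-1}(\p M)$, and is therefore itself smooth.
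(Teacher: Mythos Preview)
Your proposal is correct and follows essentially the same approach as the paper: both set up the map $F(u,f)=(\Delta_g u+a(\cdot,u),\,u|_{\partial M}-f)$, verify smoothness of the Nemytskii operator via the algebra property of $C^s$, show that $D_uF(0,0)=(L_0,\mathrm{tr}_{\partial M})$ is an isomorphism by combining the injectivity hypothesis with Fredholm/Schauder theory (citing the same source, \cite[Section~13.8]{taylor2011partial}), and then invoke the Banach-space implicit function theorem. Your explicit remarks on the $s<2$ interpretation of $C^{s-2}$ and on the bootstrap to $C^\infty$ regularity are good additions that the paper leaves implicit.
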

\begin{proof}
	%Since $\p_u H(x,u)=0$, the wellposedness of~\eqref{solvability} directly follows from \cite[Chapter 14]{taylor2011partial-3}.
	We prove the existence of solutions by using the implicit function theorem in Banach spaces~\cite[Theorem 10.6]{renardy2006introduction}. Let 
	\[
	 X=C^s(\p M), \quad Y=C^{s}(M), \quad Z=C^{s-2}(M)\times C^s(\p M).
	\]
	Consider the map 
	\[
	 F:X\times Y\to Z, \quad F(f,u)=(Q(u),u|_{\p M}-f).
	\]
	We wish to show that $F$ indeed maps to $Z$ and is a $C^{\infty}$ map. Note that since $a$ is smooth and since $C^s(M)$ is an algebra under pointwise multiplication, the map 
	\[
	u \mapsto a(x,u)
	\]
	takes $C^s(M)$ to $C^s(M)$, and if $\norm{u}_{C^s(M)} \leq K$ then $\norm{a(x,u)}_{C^s(M)} \leq C(a,s,K)$ (these facts follow from a local coordinate computation). Thus $F$ is well defined. If $u, v \in C^s(M)$ we use the Taylor formula  
	\[
	a(x,u+v) = \sum_{j=0}^m \frac{\p_u^j a(x,u)}{j!} v^j + \int_0^1 \frac{\p_u^{m+1} a(x,u+tv)}{m!} v^{m+1} (1-t)^m \,dt.
	\]
	Since $C^s(M)$ is an algebra, we have that when $\norm{v}_{C^s(M)} \leq 1$ one has 
	\[
	\left \| a(x,u+v) - \sum_{j=0}^m \frac{\p_u^j a(x,u)}{j!} v^j\right\|_{C^s(M)} \leq C_{m,a,u} \norm{v}_{C^s(M)}^{m+1}.
	\]
	This shows that $u \mapsto a(x,u)$ is a $C^{\infty}$ map $C^s(M) \to C^s(M)$. Since the other parts of $F$ are linear, $F$ is a $C^{\infty}$ map in the standard sense of~\cite[Definition 10.2]{renardy2006introduction}.
    
    Note that $F(0,0) = 0$ by \eqref{a_first_condition}. The linearization of $F$ at $(0,0)$ in the $u$-variable is 
    \[
    \left. D_uF\right|_{(0,0)}(v) = (\Delta_g v + \p_u a(x,0) v,v|_{\p M}).
    \]
    This is a homeomorphism $Y\to Z$ by \eqref{a_second_condition}. To see this, let $(w,\phi)\in Z=C^{s-2}(M)\times C^s(\p M)$, and consider the Dirichlet problem 
    \begin{align*}
    \begin{cases}
    (\Delta_g + \p_u a(x,0))v=w & \text{ in } M, \\
    v=\phi & \text{ on } \p M.
    \end{cases}
    \end{align*}
    Solutions are unique by \eqref{a_second_condition}, and using the Fredholm alternative and Schauder estimates there is a unique solution $v\in Y=C^s(M)$ (see e.g.\ \cite[Exercise 1 in Section 13.8]{taylor2011partial}). 
    Thus the implicit function theorem in Banach spaces~\cite[Theorem 10.6 and Remark 10.5]{renardy2006introduction} yields that there is $\delta>0$ and an open ball $U_{\delta}=B_X(0,\delta)\subset X$ and a $C^{\infty}$ map $S: U_{\delta} \to Y$ such that whenever $\norm{f}_{C^s(\p M)}\leq \delta$ we have 
    \[
     F(f,S(f))=(0,0).
    \]
    Since $S$ is Lipschitz continuous and $S(0) = 0$, $u = S(f)$ satisfies 
    \[
     \norm{u}_{C^{s}(M)}\leq C \norm{f}_{C^s(\p M)}.
    \]
    Moreover, by redefining $\delta$ if necessary $u=S(f)$ is the only solution to $F(f,u)=(0,0)$ whenever $\norm{f}_{C^s(\p M)}\leq \delta$ and $\norm{u}_{C^{s}(M)}\leq C \delta$.
    %We denote $u(x)=u(x;\eps)=T(\eps f)$. 
	%By redefining $\delta$ as $\min(\delta,\delta')$ 
	We have proven the existence of unique small solutions of the Dirichlet problem~\eqref{solvability} and the fact that the solution operator $S: U_{\delta} \to C^{s}(M)$ is a $C^{\infty}$ map. Since the normal derivative is a linear map $C^{s}(M) \to C^{s-1}(\p M)$, it follows that also $\Lambda$ is a well defined $C^{\infty}$ map $U_{\delta} \to C^{s-1}(\p M)$.
\end{proof}	

We now specialize to a power type nonlinearity, for which the higher order linearizations of the DN map will be particularly simple. The next proposition justifies the formal calculation that we may differentiate the equation 
\begin{equation}\label{formal_calc_eq}
\Delta_g u + q(x) u^m = 0 \text{ in $M$}, \quad u_f|_{\p M} = \eps_1f_1+\cdots+\eps_m f_m
\end{equation}
in the $\eps_j$ variables to have equations corresponding to first and $m$th linearizations, 
\[
 \Delta_gv_{f_k}=0 \text{ and }  \Delta_gw=-(m!)qv_{f_1}\cdots v_{f_m}.
\]
The normal derivative of $w$ is the $m$th linearization of the DN map of~\eqref{formal_calc_eq}. In the proposition, we write 
\[
(D^k f)_x(y_1,\ldots,y_k) %= f^{(k)}(x; y_1, \ldots, y_k)
\]
to denote the $k$th derivative at $x$ of a mapping $f$ between Banach spaces, considered as a symmetric $k$-linear form acting on $(y_1,\ldots,y_k)$. We refer to  \cite[Section 1.1]{hormander1983analysis}, where the notation $f^{(k)}(x; y_1, \ldots, y_k)$ is used instead of $(D^k f)_x(y_1,\ldots,y_k)$. %This notation should probably be explained in the introduction.]}

\begin{prop}\label{derivs_and_integral_formula}
Let $q \in C^{\infty}(M)$, and let $\Lambda_q$ be the DN map for the semilinear equation 
\begin{equation}\label{Dir_prblm}
\Delta_g u + q(x) u^m = 0 \text{ in $M$},
\end{equation}
where
\[
 m\in \N \text{ and } m\geq 2.
\]
For $f \in C^s(\p M)$, let $v_f$ be the solution of the Laplace equation 
	\begin{equation}\label{first_lin}
	\Delta_g v_f = 0 \text{ in $M$}, \qquad v_f|_{\p M} = f.
	\end{equation}
The first linearization $(D\Lambda_q)_0$ of $\Lambda_q$ at $f=0$ is the DN map of the Laplace equation:
\[
(D\Lambda_q)_0: C^s(\p M) \to C^{s-1}(\p M), \ \ f \mapsto \p_{\nu} v_f|_{\p M}.
\]
The higher order linearizations $(D^j \Lambda_q)_0$ are identically zero for $2 \leq j \leq m-1$. 

The $m$-th linearization $(D^m \Lambda_q)_0$ of $\Lambda_q$ at $f=0$ is characterized by the following identity:  
	for any $f_1, \ldots, f_{m+1} \in C^{s}(\p M)$ one has 
	\begin{equation} \label{dm_lambdaq_identity}
	\int_{\p M} (D^m \Lambda_q)_0(f_1, \ldots, f_m)f_{m+1}  \,dS = -(m!) \int_M  q v_{f_1} \cdots v_{f_{m+1}} \,dV
	\end{equation}
	here each $v_{f_k}$, $k=1,\ldots,m+1$, is a solution to~\eqref{first_lin} with boundary value $f=f_k$.
\end{prop}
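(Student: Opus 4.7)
The plan is to use the well-posedness result of Proposition \ref{wellposedness_and_expansion} applied to $a(x,u) = q(x) u^m$, which satisfies \eqref{a_first_condition} (since $m \geq 2$ so $a(x,0) = 0$) and \eqref{a_second_condition} (since $\p_u a(x,0) = 0$ as $m \geq 2$, making the linearized operator just $\Delta_g$, which is injective on $H^1_0(M)$). This gives a $C^\infty$ solution map $S: U_\delta \to C^s(M)$. I will then parameterize boundary data as $f_\epsilon = \sum_{k=1}^{m+1} \epsilon_k f_k$ with $\epsilon = (\epsilon_1, \dots, \epsilon_{m+1})$ small, write $u_\epsilon = S(f_\epsilon)$, and extract information by computing mixed partial derivatives at $\epsilon=0$. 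Note $u_0 = 0$.

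For the first linearization, differentiating $\Delta_g u_\epsilon + q u_\epsilon^m = 0$ once in $\epsilon_k$ and setting $\epsilon=0$ yields $\Delta_g (\p_{\epsilon_k} u_\epsilon|_0) + m q u_\epsilon^{m-1} \p_{\epsilon_k} u_\epsilon|_0 = 0$, and since $u_0 = 0$ and $m \geq 2$ the lower-order term vanishes. With boundary value $f_k$, this identifies $\p_{\epsilon_k} u_\epsilon|_{\epsilon=0} = v_{f_k}$, so $(D\Lambda_q)_0 f_k = \p_\nu v_{f_k}|_{\p M}$ as claimed. For the intermediate linearizations $2 \leq j \leq m-1$, since $u_\epsilon$ is $C^\infty$ in $\epsilon$ with $u_0 = 0$, Taylor expansion gives $u_\epsilon = O(|\epsilon|)$, hence $u_\epsilon^m = O(|\epsilon|^m)$. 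Taking $j < m$ mixed partial derivatives of a $C^\infty$ quantity that vanishes to order $m$ at the origin yields $0$ at $\epsilon = 0$. Meanwhile the boundary value $\sum_k \epsilon_k f_k$ has vanishing mixed partials of order $\geq 2$. Thus the function $w_j := \p_{\epsilon_{i_1}} \cdots \p_{\epsilon_{i_j}} u_\epsilon|_{\epsilon=0}$ solves $\Delta_g w_j = 0$ with $w_j|_{\p M} = 0$, so $w_j = 0$ and its normal derivative vanishes, giving $(D^j \Lambda_q)_0 = 0$.

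For the $m$-th linearization I take the mixed partial $\p_{\epsilon_1} \cdots \p_{\epsilon_m}|_{\epsilon=0}$. The key combinatorial computation is to evaluate this on $u_\epsilon^m$: writing $u_\epsilon = \sum_{k=1}^{m+1} \epsilon_k v_{f_k} + O(|\epsilon|^2)$, the leading $m$-th order contribution is $\bigl(\sum_k \epsilon_k v_{f_k}\bigr)^m$ and higher remainders are $O(|\epsilon|^{m+1})$ so they do not contribute. The coefficient of $\epsilon_1 \cdots \epsilon_m$ in $(\sum_k \epsilon_k v_{f_k})^m$ equals $\sum_{\sigma \in S_m} v_{f_{\sigma(1)}} \cdots v_{f_{\sigma(m)}} = m!\, v_{f_1} \cdots v_{f_m}$, so the mixed partial of $u_\epsilon^m$ at $0$ is $m!\, v_{f_1} \cdots v_{f_m}$. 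Thus $w := \p_{\epsilon_1} \cdots \p_{\epsilon_m} u_\epsilon|_{\epsilon=0}$ satisfies
\begin{equation*}
\Delta_g w = -m!\, q\, v_{f_1} \cdots v_{f_m} \text{ in } M, \qquad w|_{\p M} = 0,
\end{equation*}
and by the same mixed differentiation applied to $\Lambda_q(f_\epsilon) = \p_\nu u_\epsilon|_{\p M}$ we obtain $\p_\nu w|_{\p M} = (D^m \Lambda_q)_0(f_1, \dots, f_m)$.

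Finally I pair with the harmonic function $v_{f_{m+1}}$ and apply Green's identity: since $w|_{\p M} = 0$ and $\Delta_g v_{f_{m+1}} = 0$,
\begin{equation*}
\int_{\p M} (D^m \Lambda_q)_0(f_1, \dots, f_m) f_{m+1} \, dS = \int_M (\Delta_g w)\, v_{f_{m+1}} \, dV = -m! \int_M q\, v_{f_1} \cdots v_{f_{m+1}} \, dV,
\end{equation*}
which is the claimed identity \eqref{dm_lambdaq_identity}. The only mildly subtle step is justifying that the mixed partial derivatives of $u_\epsilon^m$ behave as expected, but this is an immediate consequence of the $C^\infty$ dependence $\epsilon \mapsto u_\epsilon \in C^s(M)$ guaranteed by Proposition \ref{wellposedness_and_expansion} together with the fact that multiplication is a continuous bilinear map on the algebra $C^s(M)$; no further PDE estimates are needed beyond well-posedness.
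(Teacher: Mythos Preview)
Your proof is correct and follows essentially the same approach as the paper's: verify the hypotheses of Proposition~\ref{wellposedness_and_expansion}, differentiate the equation in the $\epsilon$-parameters, identify the resulting linearized equations, and finish with Green's identity against the harmonic function $v_{f_{m+1}}$. The only cosmetic difference is that you handle the combinatorics via Taylor expansion and order counting ($u_\epsilon^m = (\sum_k \epsilon_k v_{f_k})^m + O(|\epsilon|^{m+1})$), whereas the paper argues directly with the Leibniz rule that every term in $\p_{\epsilon_1}\cdots\p_{\epsilon_k}(u_f^m)$ carries a positive power of $u_f$ for $k<m$; both arguments are equivalent.
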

\begin{proof}
The nonlinearity $a(x,u) = q(x) u^m$ satisfies the conditions in Proposition \ref{wellposedness_and_expansion}, and thus the DN map $\Lambda_q = \p_{\nu} S|_{\p M}$ is well defined for small data. Here $S: f \mapsto u_f$ is the solution operator for the Dirichlet problem of the equation~\eqref{Dir_prblm}. To compute the derivatives of $\Lambda_q$ at $0$, it is enough to consider derivatives of $S$. Writing $f := \eps_1 f_1 + \ldots + \eps_k f_k$, the function $u_f \in C^{s}(M)$ depends smoothly on $\eps_1, \ldots, \eps_k$ since $S$ is smooth. Applying $\p_{\eps_1} \cdots \p_{\eps_k}|_{\eps_1=\ldots=\eps_k=0}$ to the Taylor formula (see e.g.\ \cite[Section 1.1]{hormander1983analysis})
\[
S(f) = \sum_{j=0}^k \frac{(D^j S)_0(f, \ldots, f)}{j!} + \int_0^1 \frac{(D^{k+1} S)_{tf}(f, \ldots, f)}{k!} (1-t)^k \,dt
\]
implies that $(D^k S)_0$ may be computed using the formula 
\[
(D^k S)_0(f_1, \ldots, f_k) = \p_{\eps_1} \cdots \p_{\eps_k} u_f|_{\eps_1=\ldots=\eps_k=0}.
\]
Moreover, since $u_f$ is smooth in the $\eps_j$ variables and $\Delta_g$ is linear, we may differentiate the equation 
\begin{equation} \label{mth_power_nonlinearity}
\Delta_g u_f + q(x) u_f^m = 0, \qquad u_f|_{\p M} = f
\end{equation}
freely in the $\eps_j$ variables.

Let first $k = 1$, so that $u = u_{\eps_1 f_1}$. Since $u_0 = 0$ and $m \geq 2$, the derivative of \eqref{mth_power_nonlinearity} in $\eps_1$ evaluated at $\eps_1 = 0$ satisfies 
\[
\Delta_g (\p_{\eps_1} u_f|_{\eps_1 = 0}) = 0, \qquad \p_{\eps_1} u_f|_{\p M} = f_1.
\]
Thus the first linearization of the map $S$ at $f=0$ is 
\[
(DS)_0(f_1) = \p_{\eps_1} u_{\eps_1 f_1}|_{\eps_1=0} = v_{f_1},
\]
where $v_{f_1}$ satisfies~\eqref{first_lin} with $f=f_1$.

For $2 \leq k \leq m-1$, applying $\p_{\eps_1} \cdots \p_{\eps_k}|_{\eps_1=\ldots=\eps_k=0}$ to \eqref{mth_power_nonlinearity} gives that 
\[
\Delta_g (\p_{\eps_1} \cdots \p_{\eps_k} u_f|_{\eps_1=\ldots=\eps_k=0}) = 0, \qquad \p_{\eps_1} \cdots \p_{\eps_k} u_f|_{\p M} = 0,
\]
since $\p_{\eps_1} \cdots \p_{\eps_k} (q(x) u_f^m)$ is a sum of terms containing positive powers of $u_f$, which are equal to zero when $f = 0$. Uniqueness of solutions for the Laplace equation implies that 
\[
(D^j S)_0(f_1, \ldots, f_k) = 0, \qquad 2 \leq k \leq m-1.
\]
When $k=m$, the only term in $\p_{\eps_1} \cdots \p_{\eps_m} (q(x) u_f^m)$ which does not contain a positive power of $u_f$ is $q(x) (m!) (\p_{\eps_1} u_f) \cdots (\p_{\eps_m} u_f)$. This is the only nonzero term after setting $\eps_1 = \ldots = \eps_m = 0$, and thus the function 
\[
%u^{(m)}  
w:= (D^m S)_0(f_1, \ldots, f_m) = \p_{\eps_1} \cdots \p_{\eps_m} u_f|_{\eps_1=\ldots=\eps_m=0}
\]
solves 
\begin{equation} \label{laplace_um_equation}
\Delta_g w = - q(x) (m!) v_{f_1} \cdots v_{f_m} \text{ in $M$}
\end{equation}
with zero Dirichlet boundary values.

By linearity one has 
\[
(D^k \Lambda_q)_0 = \p_{\nu} (D^k S)_0|_{\p M}.
\]
The claims for $(D^k \Lambda_q)_0$ when $1 \leq k \leq m-1$ follow immediately. For $k = m$ we observe that  $(D^m \Lambda_q)_0(f_1,\ldots,f_m) = \p_{\nu} w|_{\p M}$ satisfies 
\[
\int_{\p M} (\p_{\nu} w) f_{m+1} \,dS = \int_M ( \langle d w, d v_{f_{m+1}} \rangle_g + (\Delta_g w) v_{f_{m+1}} ) \,dV.
\]
The integral of $\langle d w, d v_{f_{m+1}} \rangle_g$ vanishes since $w|_{\p M} = 0$ and $v_{f_{m+1}}$ is harmonic. The proposition follows by using \eqref{laplace_um_equation}.
\end{proof}

\section{Proof of Theorem \ref{main thm q}}\label{Section 3}

In this section, we use the higher order linearization approach (in fact, the second order linearization of the DN map) to prove Theorem \ref{main thm q}. %We use first order linearization to recover the conformal class of the manifold by using the result~\cite{}. Then by using the result~\cite{} we recover the potential on the known conformal manifold up to gauge symmetry of the problem.
\begin{proof}[Proof of Theorem \ref{main thm q}]
	We could use Proposition~\ref{derivs_and_integral_formula} to have the integral equation~\eqref{integral id in Rn} below directly, even for the product of three harmonic functions instead of two (this is a stronger statement since one can always take the third harmonic function to be constant). The theorem would follow from this by using harmonic exponentials. However, we choose to give a direct hands-on approach that describes how to use the method. %in the simple setting of Theorem~\ref{main thm q}.  
	
	Let $\eps_1, \eps_2$ be sufficiently small numbers and let $f_1,f_2\in C^\infty(\p M)$. %$f=\eps_1 f_1 + \eps_2 f_2$, for some smooth function $f_1,f_2\in C^\infty(\p M)$. 
	Let the function $u_j:= u_j (x;\eps_1,\eps_2)\in C^s(M)$ be the unique small solution of 
	\begin{align}\label{equ1 in 1st example}
	\begin{cases}
	\Delta u_j +q_j u_j^2 =0 &\text{ in }\Omega,\\
	u_j =\eps_1 f_1 +\eps_2 f_2 &\text{ on }\p \Omega,
	\end{cases}
	\end{align}
	for $j=1,2$ provided by Proposition~\ref{derivs_and_integral_formula}. Let us differentiate \eqref{equ1 in 1st example} with respect to $\epsilon_{\ell}$  so that 
	\begin{align}\label{equ2 in 1st example}
	\begin{cases}
	\Delta \left(\frac{\p}{\p \eps_\ell}u_j\right) +2 q_j u_j \left(\frac{\p}{\p \eps_\ell}u_j\right)=0 & \text{ in }\Omega, \\
	\frac{\p}{\p \eps_\ell}u_j = f_\ell &\text{ on }\p \Omega.
	\end{cases}
	\end{align}
	Inserting $\eps_1=\eps_2=0$ into \eqref{equ2 in 1st example}, shows that 
	\begin{align*}\label{equ3 in 1st example}
	\Delta v_j^{(\ell)} =0 \text{ in }\Omega \text{ with }v_j^{(\ell)}=f_\ell \text{ on }\p \Omega,
	\end{align*}
	where 
	\[
	v_j^{(\ell)}(x) =\left.\frac{\p}{\p \eps_\ell}\Big|_{\eps_1=\eps_2=0}u_j(x;\eps_1,\eps_2)\right..
	\]
	Here we used  $u_j(x;0,0)\equiv 0$. % since $0$ is the only solution to \eqref{equ1 in 1st example} for the zero boundary data. 
	The functions $v_j^\ell$ are just harmonic functions defined in $\Omega$ with boundary data $f_\ell|_{\p \Omega}$.
	By uniqueness of the Dirichlet problem for the Laplacian we have that 
	\begin{align}\label{v_1 =v_2 Rn}
	v^{(\ell)}:=v_1^{(\ell)}=v_2^{(\ell)} \text{ in } \Omega  \text{ for } \ell=1,2.
	\end{align} 
	%due the uniqueness of the harmonic functions with the same boundary condition. 
	
	Next, let us differentiate \eqref{equ2 in 1st example} with respect to $\epsilon_k$ for $k\neq \ell$. Then we have that
	\begin{align}\label{equ 3 in 1st example}
	\begin{cases}
	\Delta \left(\frac{\p^2}{\p \eps_1 \p \eps_2}u_j \right) + 2q_j u_j \left(\frac{\p^2}{\p \eps_1 \p \eps_2}u_j \right) + 2 q_j \left(\frac{\p u_j}{\p \eps_1}\right) \left(\frac{\p u_j }{\p \eps_2} \right) =0 & \text{ in }\Omega, \\
	\frac{\p^2}{\p \eps_1 \p \eps_2}u_j =0 & \text{ on }\p \Omega.
	\end{cases}
	\end{align}
	Again, evaluating at $\eps_1=\eps_2=0$, the equation \eqref{equ 3 in 1st example} becomes
	\begin{align}\label{equ 4 in 1st example}
	\begin{cases}
	\Delta  w_j + 2 q_j v^{(1)}v^{(2)}  =0 & \text{ in }\Omega, \\
	w_j=0 & \text{ on }\p \Omega,
	\end{cases}
	\end{align}
	where $w_j(x)=\left(\frac{\p^2}{\p \eps_1 \p \eps_2}u_j\right)(x;0,0)$ and we used $u_j(x;0,0)\equiv 0$ for $j=1,2$ again. By using the fact that $\Lambda_{q_1}(\eps_1 f_1 +\eps_2 f_2)=\Lambda_{q_2}(\eps_1 f_1 +\eps_2 f_2)$ for small $\eps_1, \eps_2$, we have 
	\[
	\p_{\nu} u_1|_{\p \Omega} = \p_{\nu} u_2|_{\p \Omega},
	\]
	and applying $\p_{\eps_1} \p_{\eps_2}|_{\eps_1=\eps_2=0}$ to this identity gives that 
	\[
	\p_\nu w_1|_{\p \Omega}=\p_\nu w_2|_{\p \Omega}. %  \text{ on } \p \Omega.	
	\]
	Thus, by integrating the equation~\eqref{equ 4 in 1st example} over $\Omega$ (i.e.\ integrating against the harmonic function $v^{(3)} = 1$) and by using integration by parts we have	
	\begin{align}\label{integral id in Rn}
	0=\int_{\p \Omega} (\p_\nu w_1-\p_\nu w_2) \,dS=\int_{\Omega} \Delta(w_1 -w_2) \,dx=2\int_\Omega (q_2-q_1)v^{(1)}v^{(2)} \,dx
	\end{align}
	where $v^{(1)}$ and $v^{(2)}$ are defined in \eqref{v_1 =v_2 Rn}. Therefore, by choosing $f_1$ and $f_2$ as the boundary values of the Calder\'on's exponential solutions~\cite{calderon2006inverse},
	\begin{align}\label{exp_sols}
	v^{(1)}(x):=\exp((k+i\xi)\cdot x), \quad v^{(2)}(x):=\exp((-k+i\xi)\cdot x),
	\end{align}
	where $k,\xi\in \R^n$, $k\perp \xi$ and $\abs{k}=\abs{\xi}$, we obtain that the Fourier transformation of the difference $q_2-q_1$ is zero. Thus $q_1=q_2$. %forms a dense subset in $L^1(\Omega)$ such that $q_1 =q_2$ in $\Omega$.
\end{proof}
In the proof above we did not need to construct special solutions for an elliptic equation with unknown coefficients, such as complex geometrical optics solutions. The linearization technique allowed us to simply use known harmonic functions. %We will demonstrate the power of this method in different geometrical settings. 
This fact gives an extremely simple reconstruction in the setting of Theorem~\ref{main thm q}. %Furthermore, we have the following reconstruction formula.
\begin{cor} \label{cor main thm q}
	Let $n\geq 2$, and let $\Omega\subset \R^n$ be a bounded domain with $C^\infty$ boundary $\p \Omega$. Assume that $q\in C^\infty(\ol{\Omega})$, and let $\Lambda_q$ be the DN map for the equation %the DN maps $\Lambda_{q_j}$ for the equations 
	\begin{equation*}\label{simplest_nonlinear_reconstruct}
	\Delta u + qu^2 =0 \text{ in } \Omega.
	\end{equation*}
	Then
	\begin{align}\label{reconstruction formula}
	\widehat{q}(-2 \xi)=-\frac{1}{2}\int_{\p \Omega}\frac{\p^2}{\p \eps_1 \p \eps_2}\Big|_{\eps_1=\eps_2=0}\Lambda_q(\eps_1f_1+\eps_2f_2) \,dS,
	\end{align}
	where $f_1$ and $f_2$ are the boundary values of the exponential solutions~\eqref{exp_sols} and $\widehat{q}$ stands for the Fourier transform of $q$.
\end{cor}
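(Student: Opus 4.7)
The plan is to recycle the second-order linearization computation from the proof of Theorem~\ref{main thm q}, but now applied to a single potential $q$ and with the test function $v^{(3)} \equiv 1$ playing the role of the third harmonic factor. By Proposition~\ref{derivs_and_integral_formula} (or directly, the derivation leading to \eqref{equ 4 in 1st example}), differentiating the nonlinear equation twice in $\eps_1,\eps_2$ and evaluating at $\eps_1=\eps_2=0$ produces the function
\[
w(x) := \left.\frac{\p^2}{\p\eps_1\p\eps_2}\right|_{\eps_1=\eps_2=0} u_{\eps_1 f_1+\eps_2 f_2}(x),
\]
which solves $\Delta w = -2q\, v^{(1)} v^{(2)}$ in $\Omega$ with $w|_{\p\Omega}=0$, where $v^{(\ell)}$ is the harmonic extension of $f_\ell$. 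Moreover, differentiation of the identity $\Lambda_q(\eps_1 f_1+\eps_2 f_2) = \p_\nu u|_{\p\Omega}$ gives
\[
\left.\frac{\p^2}{\p\eps_1\p\eps_2}\right|_{\eps_1=\eps_2=0} \Lambda_q(\eps_1 f_1+\eps_2 f_2) = \p_\nu w|_{\p\Omega}.
\]

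Next I will integrate the PDE for $w$ against the constant harmonic function $v^{(3)}\equiv 1$. Integration by parts (equivalently, integrating $\Delta w$ over $\Omega$) yields
\[
\int_{\p\Omega} \p_\nu w\, dS = \int_\Omega \Delta w\, dx = -2\int_\Omega q(x)\, v^{(1)}(x)\, v^{(2)}(x)\, dx.
\]
Now I substitute the explicit Calder\'on exponentials $v^{(1)}(x)=\exp((k+i\xi)\cdot x)$ and $v^{(2)}(x)=\exp((-k+i\xi)\cdot x)$ from \eqref{exp_sols}. The key observation is that the $k$-dependence cancels:
\[
v^{(1)}(x)\, v^{(2)}(x) = e^{2i\xi\cdot x}.
\]
Extending $q$ by zero outside $\Omega$, the right-hand integral is therefore exactly $\widehat{q}(-2\xi)$ (with the usual convention $\widehat{q}(\eta)=\int q(x) e^{-i\eta\cdot x}\, dx$).

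Combining these two displays gives
\[
\int_{\p\Omega}\left.\frac{\p^2}{\p\eps_1\p\eps_2}\right|_{\eps_1=\eps_2=0}\Lambda_q(\eps_1 f_1+\eps_2 f_2)\, dS = -2\,\widehat{q}(-2\xi),
\]
which is \eqref{reconstruction formula} after dividing by $-2$. There is no real obstacle here since all the analytic content—well-posedness of the Dirichlet problem, smoothness of the solution map in the parameters $\eps_j$, and the identification of the mixed derivative of $\Lambda_q$ with $\p_\nu w$—has already been established in Section~\ref{Section 2} and in the proof of Theorem~\ref{main thm q}; the corollary is essentially a bookkeeping statement packaging that argument into an explicit inversion formula, using the fact that any $\xi\in\R^n$ can be paired with some $k\in\R^n$ satisfying $k\perp\xi$ and $|k|=|\xi|$ (which requires $n\geq 2$), so that $\widehat{q}$ is determined on all of $\R^n$ and hence $q$ is reconstructed from $\Lambda_q$.
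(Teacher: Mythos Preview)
Your argument is correct and is exactly the approach the paper has in mind: the paper's proof is the single line ``the reconstruction can be directly read from \eqref{integral id in Rn},'' and what you have written is precisely that computation specialized to a single potential $q$ (rather than a difference $q_2-q_1$) and with the third harmonic function taken to be $1$. No differences in method.
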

\begin{proof}
	The proof of the reconstruction can be directly read from \eqref{integral id in Rn} in the proof of Theorem~\ref{main thm q}.
\end{proof}

We end of this section with a remark about the stability of the reconstruction formula in Corollary~\ref{cor main thm q}.% result as follows.
\begin{rmk}
	From the above reconstruction formula, we adapt the same notations in Section \ref{Section 2}, then we can simply rewrite the formula \eqref{reconstruction formula} as 
	\[
	\widehat{q}_j(-2\xi) =-\frac{1}{2} \int_{\p \Omega}(D^2 \Lambda_{q_j})_0(f_1, f_2) \,dS, \text{ for }j=1,2.
	\]
	Subtracting these two formulas gives that 
	\[
	(\widehat{q}_1-\widehat{q}_2)(-2\xi) = -\frac{1}{2} \int_{\p \Omega} ((D^2 \Lambda_{q_1})_0 - (D^2 \Lambda_{q_2})_0)(f_1, f_2) \,dS.
	\]
Now, we assume that $\norm{D^k(\Lambda_{q_1}-\Lambda_{q_2})_0}_*$ is sufficiently small for $k=0,1,2$, and $\norm{q_j}_{H^1(\Omega)} \leq R$ for $j=1,2$, where 
	\[
	\norm{T}_*=\sup_{\norm{f_1}_{C^s(\p \Omega)}=\ldots=\norm{f_k}_{C^s(\p \Omega)}=1} \norm{T(f_1, \ldots, f_k)}_{C^{s-1}(\p \Omega)}
	\]
	for a bounded $k$-linear form $T: C^s(\p \Omega) \times \ldots \times C^s(\p \Omega) \to C^{s-1}(\p \Omega)$. Next, by taking harmonic functions $v_{f_1}=v^{(1)}$, $v_{f_2}=v^{(2)}$in $\Omega$, where $v^{(1)},v^{(2)}$ are the functions defined in \eqref{exp_sols}, one can obtain that 
	\begin{align}\label{log stability}
	\norm{q_1-q_2}_{L^2} \leq \omega \left(\norm{D^2(\Lambda_{q_1}-\Lambda_{q_2})_0}_*\right),
	\end{align}
	where $\omega(t)$ is a modulus of continuity satisfying, for some $C = C(R)$, 
	\[
	\omega(t) \leq C| \log t|^{-\frac{2}{n+2}}, \quad 0<t<\frac{1}{e}.
	\]
	One can directly prove the logarithmic stability \eqref{log stability} by using standard arguments in stability for the Calder\'on problem, for example, see \cite[Section 4]{salo2008calderon}.
\end{rmk}

\section{Simultaneous recovery on two-dimensional Riemannian surfaces}\label{Section 4}

We use the higher order linearization approach to simultaneously recover, from the DN map, the conformal class of a Riemannian surface and the potential of a semilinear Schr\"odinger operator up to the gauge symmetry. %with a nonlinearity of the form $qu^m$ nonlinearity. %Let us prove Theorem \ref{main thm 2D} as follows. 
We use first order linearization to recover the conformal class of the manifold by using the result~\cite{lassas2001determining} (see also \cite{lassas2018poisson} for a recent alternative proof). Then by using the result~\cite{guillarmou2011calderon} we recover the potential on the known conformal manifold (up to gauge).
%~\footnote{I think it's better to have only one formulation.}

\begin{proof}[Proof of Theorem \ref{main thm 2D}]
The proof is divided into two steps. We first recover the manifold and the conformal class of the metric. After that we recover the potential on a known manifold up to the gauge. %$g$ and recover the potential $q$, and the argument is similar to the proof of Theorem \ref{main thm q} and we use the multi-linearization approach.

	\vspace{10pt}
	
{\it Step 1. Recovering the conformal manifold.}\\

\noindent  Note first that 
\begin{align*}
&\Lambda_{q_1}(f) = \Lambda_{q_2}(f) \text{ for small $f$} \\
\implies & (D\Lambda_{q_1})_0 = (D\Lambda_{q_2})_0.
\end{align*}
By Proposition~\ref{derivs_and_integral_formula}, $(D\Lambda_{q_j})_0$ is the DN map for the Laplace equation on $(M,g_j)$, i.e.\ we have that the DN maps of the following Dirichlet problems
\begin{align*}
\begin{cases}
\Delta_{g_j} v_j =0 & \text{ in }M_j, \\
v_j=f & \text{ on } \p M
\end{cases}
	\end{align*}
	agree.
%	
%Via \eqref{first order DN map}, we have 
%\[
%\Lambda_1'=\Lambda_2' \text{ on }\p M.
%\]
Thus by using
%\cite[Theorem 1.1]{lassas2001determining}~\footnote{Is the partial data also here?}  
\cite[Theorem 5.1]{lassas2018poisson}, one can determine the manifold and the Riemannian metric up to a conformal transformation. That is, there exists a conformal $C^\infty$ diffeomorphism $J$ such that $\sigma J^*g_2= g_1$ with $J|_{\p M}=\Id$, for some smooth positive function $\sigma\in C^\infty(M_1)$ with $\sigma |_{\p M} =1$. This completes the first part of the proof.
 
	\vspace{10pt}
	
{\it Step 2. Recovering the potential.}\\

\noindent 
Let us make a change of coordinates to pass from the equation~\eqref{2Deq} on $(M_2,g_2)$ onto the manifold $(M_1,g_1)$. We denote
\[
\widetilde{q}_2=\sigma^{-1}q_2\circ J \equiv \sigma^{-1}J^*q_2.
\]
%holds.
%(After that we will show by Colin-Leo paper that $q_1=\tilde{q}_2$.)
Let $f\in  C^s(\p M)$ be small and let $u_2$ be the solution to
\[
\Delta_{g_2}u_2+q_2u_2^m=0 \text{ in }M_2\text{ with } u_2=f \text { on } \p M.
\]
Let us denote
\[
\widetilde{u}_2:=J^*u_2\equiv u_2\circ J.
\]
Then  $\widetilde{u}_2$ solves
\begin{align*}
\Delta_{g_1}\widetilde{u}_2+\widetilde{q}_2(\widetilde{u}_2)^m&=\Delta_{\sigma J^*g_2}\widetilde{u}_2 +\widetilde{q}_2(\widetilde{u}_2)^m \\
&=\sigma ^{-1}\Delta_{J^*g_2}\widetilde{u}_2 +\sigma^{-1}(J^*q_2)(\widetilde{u}_2)^m \\
&= \sigma ^{-1}J^*(\Delta_{g_2}u_2)+ \sigma^{-1}(J^*q_2)(J^*u_2)^m \\
&=\sigma^{-1}\left[J^*(\Delta_{g_2} u_2 ) + (J^*q_2)(J^* u_2)^m\right].
\end{align*}
Here we used the conformal invariance of the Laplace-Beltrami operator in dimension $2$ in the second equality. In the third equality, the coordinate invariance of Laplace-Beltrami operator was used.
Since $u_2$ solves $\Delta_{g_2}u_2+ q_2u_2 ^m=0$ in $M_2$, we trivially have that $J^*(\Delta_{g_2} u_2 ) + (q_2 \circ J)(J^* u_2)^m=0$ in $M_1$. % \text{ in }M_1.
%\]
Consequently, we have that 
\begin{align}\label{conformal change}
	\begin{cases}
	\Delta_{g_1}\widetilde{u}_2+\widetilde{q}_2(\widetilde{u}_2)^m=0 & \text{ in }M_1,\\
	\widetilde{u}_2=f & \text{ on }\p M,
	\end{cases}
\end{align}
since $\sigma|_{\p M}=1$ and $J|_{\p M}=\text{Id}$.

Next, let $u_1$ solve the nonlinear equation~\eqref{2Deq} on $(M_1,g_1)$ with potential $q_1$ and boundary value $f$. We show that there holds
\begin{equation}\label{DN_maps_on_single_manifold}
\p_{\nu_1} u_1=\p_{\nu_1}\widetilde{u}_2 \text{ on }\p M,
\end{equation}
by the assumption that $\Lambda_{M_1,g_1,q_1}=\Lambda_{M_2,g_2,q_2}$ for small data. % maps agree on $\p M$ for the original equations on $(M_1,g_1)$ and $(M_2,g_2)$. 
Since $\Lambda_{M_1,g_1,q_1}=\Lambda_{M_2,g_2,q_2}$, it follows that if $u_1 =u_2=f$  on  $\p M$, then %  \text{ and }
\begin{equation*}\label{DNmapsgive}
\p_{\nu_1}u_1=\p_{\nu_2}u_2 \text{ on }\p M.
\end{equation*}
We calculate
%We obtain by using the fact that the Jacobian determinant of $J$ is the identity matrix on the boundary (needs to be exrtacted from Poisson paper) with $\sigma|_\Gamma=1$ and $J|_{\Gamma}=\text{Id}$ that
\[
\p_{\nu_2}u_2=\nu_2\cdot du_2=\nu_2\cdot d(u_2\circ J \circ J^{-1})=(J^{-1}_*\nu_2)\cdot d\tilde{u}_2=\nu_1\cdot d\tilde{u}_2=\p_{\nu_1} \widetilde{u}_2.
%
%=\p_{\nu_2}\widetilde{u}_2\circ J^{-1}=\p_{\nu_1}\widetilde{u}_2 \text{ on }\Gamma.
\]
Here $\cdot$ denotes the canonical pairing between vectors $\nu_2\cdot du_2=\nu_2^k \p_k u$ (with Einstein summation implied), and we used the facts that $J$ is diffeomorphic conformal mapping with $J|_{\p M}=\text{Id}$ and $\sigma|_{\p M} =1$. Thus we have~\eqref{DN_maps_on_single_manifold} and consequently
%Thus we know the DN maps 
%\[
%\Delta_{g_1}\widetilde{u}_2+\widetilde{q}_2(\widetilde{u}_2)^m=0 \text{ in }M_1.
%\]
%and there is no needs to consider $M_2$ manifold at all. Hence, we have 
\begin{equation*}\label{transfd_DN_maps_agree}
\Lambda_{M_1,g_1,q_1}(f) = \widetilde{\Lambda}_{M_1,g_1,\tilde{q}_2}(f) \text{ for small $f$},
\end{equation*}
where $\widetilde{\Lambda}_{M_1,g_1,\tilde{q}_2}$ stands for the DN map of the Dirichlet problem \eqref{conformal change}.

%\textbf{(a) Full data}
%The rest of the proof for the partial data is more complicated than the full data. We first consider the full data, i.e. $\Gamma=\p M$.
By Proposition~\ref{derivs_and_integral_formula}, we have 
\begin{align*}
 &\Lambda_{M_1,g_1,q_1}(f) = \widetilde{\Lambda}_{M_1,g_1,\tilde{q}_2}(f) \text{ for small $f$} \\
 \implies & (D^2 \Lambda_{M_1,g_1,q_1})_0 = (D^2 \widetilde{\Lambda}_{M_1,g_1,\tilde{q}_2})_0 \\
 \implies & \int_{M_1} (q_1-\widetilde{q}_2) v_1 v_2 v_3 \,dV = 0
\end{align*}
where $v_j \in C^s(M_1)$ are harmonic functions in $M_1$. Choosing $v_3 = 1$, we get 
\[
\int_{M_1} (q_1-\widetilde{q}_2) v_1 v_2 \,dV = 0
\]
for any harmonic functions $v_1$ and $v_2$ in $M_1$. Choosing $v_j$ to be complex geometrical optics solutions constructed in \cite{guillarmou2011calderon} (see the proof of Proposition 5.1 in that paper, actually since $v_j$ are harmonic Carleman estimates are not needed and the construction in \cite{guillarmousalotzou_complex} would suffice), it follows that 
\[
 q_1 = \widetilde{q}_2 \text{ in }M_1.
\]
This concludes the proof. 
%, one can see that $\left\{ \Pi_{\ell=1}^m v_1^{(\ell)} \right\}$ forms a complete dense set in $L^1(M)$. The reason is that  we can take $v_1^{(1)}$, $v_1^{(2)}$ to be some exponential solutions as in \cite{calderon2006inverse} and $v_1^{(3)}=\cdots = v_1^{(m)}=1$ (constant functions are harmonic). Thus, the integral identity \eqref{integral id 2D} implies that $q_1=\widetilde q_2$ as desired.
%\textbf{(b) Partial data
\end{proof}

%\begin{rmk}
%	In the second part of the above proof, we only need to construct special solution for harmonic functions $v_1^{(\ell)}$, where we do not need to use the regularity assumptions on the potential $q_j$ to be H\"older's continuous functions.
%\end{rmk}

\section{Global uniqueness on transversally anisotropic manifolds}\label{Section 5}

We will prove the following result, whose proof is based on the construction in \cite{ferreira2013calderon} of harmonic functions on transversally anisotropic manifolds that concentrate near certain two dimensional surfaces.

\begin{prop}\label{Four_Gaussian_beams}
	Let $(M,g)$ be a transversally anisotropic manifold and assume that $m \geq 4$. If $f \in C^1(M)$ satisfies 
	\begin{align}\label{integral id to zero on CTA}
	\int_M f u_1 \cdots u_m \,dV = 0
	\end{align}
	for any $u_j \in C^{\infty}(M)$ with $\Delta_g u_j = 0$ in $M$, then $f \equiv 0$.
\end{prop}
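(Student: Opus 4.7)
The plan is to exploit the CGO framework of~\cite{ferreira2013calderon}: for each non-tangential geodesic $\gamma$ on $(M_0,g_0)$ and each parameter $s = \tau+i\lambda$ with $\tau\to\infty$ and $\lambda\in\mR$ fixed, one constructs a harmonic function on $M$ of the form
\[
 u_s = e^{-sx_1}\bigl(v(y) + r(x_1,y)\bigr),
\]
where $v = e^{i\tau\psi}a$ is a Gaussian beam quasimode on $(M_0,g_0)$ satisfying $(-\Delta_{g_0}-s^2)v = O_{L^2}(\tau^{-N})$, with $\mathrm{Im}\,\psi \geq 0$ vanishing to second order exactly on $\gamma$ (so that $|v|^2$ concentrates on $\gamma$ with Gaussian transverse profile of width $\tau^{-1/2}$), and where $r$ is an $L^2$-small remainder built via Carleman estimates on $M$ with weight $x_1$. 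Crucially, this construction needs no assumption on the transversal geometry beyond compactness with smooth boundary.

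I would then fix an interior point $p_0 \in M_0$, pick non-tangential geodesics $\gamma_1, \gamma_2 \subset M_0$ intersecting transversally only at $p_0$, and form the four harmonic CGOs
\[
 u_1 = e^{-s_1 x_1}(v_1+r_1),\ u_2 = e^{\bar s_1 x_1}(\bar v_1 + r_1'),\ u_3 = e^{-s_2 x_1}(v_2+r_2),\ u_4 = e^{\bar s_2 x_1}(\bar v_2 + r_2'),
\]
where $s_j = \tau+i\lambda_j$ and $v_j$ is the Gaussian beam on $\gamma_j$ for the equation $(-\Delta_{g_0}-s_j^2)v = 0$; conjugating the equation shows that $\bar v_j$ satisfies $(-\Delta_{g_0} - \bar s_j^2)\bar v_j = O(\tau^{-N})$, so $u_2$ and $u_4$ are genuine CGOs (\emph{not} complex conjugates of $u_1, u_3$). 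For $m > 4$ I would set $u_5 = \cdots = u_m \equiv 1$. The $\tau$-exponentials cancel in the fourfold product, leaving to leading order
\[
 u_1 u_2 u_3 u_4 = e^{-2i(\lambda_1+\lambda_2)x_1}\,e^{-2\tau(\mathrm{Im}\,\psi_1+\mathrm{Im}\,\psi_2)}|a_1|^2|a_2|^2 + (\mathrm{errors}).
\]
Since $\mathrm{Hess}\,\mathrm{Im}\,\psi_j(p_0)$ is positive on the $(n-2)$-dimensional normal space to $\dot\gamma_j(p_0)$, transversality of $\gamma_1, \gamma_2$ makes $\mathrm{Hess}(\mathrm{Im}\,\psi_1 + \mathrm{Im}\,\psi_2)(p_0)$ positive definite of full rank $n-1$, so after normalising by $\tau^{(n-1)/2}$ the transverse factor converges weakly on $M_0$ to $c\,\delta_{p_0}$ for some explicit $c>0$.

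Substituting these solutions into~\eqref{integral id to zero on CTA} and letting $\tau\to\infty$ would yield
\[
 \int_{\{x_1 : (x_1,p_0)\in M\}} f(x_1,p_0)\, e^{-2i(\lambda_1+\lambda_2)x_1}\, dx_1 = 0
\]
for all $\lambda_1,\lambda_2\in\mR$. Since $\mu := \lambda_1 + \lambda_2$ ranges over all of $\mR$, Fourier inversion on the compact $x_1$-slice forces $f(\,\cdot\,, p_0) \equiv 0$. Every interior point of $M_0$ is the transverse intersection of two non-tangential geodesics (openness of non-tangentiality under perturbation of initial direction), so continuity of $f$ then gives $f\equiv 0$ on $M$.

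The main technical obstacle will be controlling the error terms in the fourfold product. The leading $y$-integral already contributes only $O(\tau^{-(n-1)/2})$ after Gaussian concentration, so cross terms involving the remainders $r_j, r_j'$ (a priori of size $o(1)$ in $L^2$) must decay faster than this rate. I expect this to require constructing the Gaussian beam quasimodes to sufficiently high order so that $\|r_j\|_{L^2} = O(\tau^{-N})$ for $N$ large, after which the mixed products can be estimated by Cauchy--Schwarz combined with the sharp Gaussian localisation of the unperturbed factors $|v_1|^2 |v_2|^2$. The decisive new feature compared with the standard two-CGO argument is that four CGOs concentrated on two intersecting geodesics produce a pointwise Dirac mass on $M_0$ with no residual oscillation in the transverse directions, which is precisely what frees the argument from any injectivity assumption on the geodesic X-ray transform of $(M_0, g_0)$.
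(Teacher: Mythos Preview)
Your proposal captures exactly the heuristic the paper spells out before its full proof: four Gaussian-beam CGOs along two transversally intersecting geodesics produce a product that concentrates at their intersection point with no residual oscillation, so one recovers $\hat f(\,\cdot\,,p_0)$ pointwise without any X-ray transform. The error control you sketch (high-order quasimodes giving $\|r_j\|=O(\tau^{-N})$) is also what the paper does.

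The genuine gap is the clause ``intersecting transversally \emph{only} at $p_0$'' together with the implicit assumption that $\gamma_1,\gamma_2$ do not self-intersect. On an arbitrary transversal manifold $(M_0,g_0)$ neither can be arranged in general: two nontangential geodesics through $p_0$ may meet at several other points (think of a metric close to the round sphere, where great circles meet twice), and each may self-intersect at $p_0$. In that case the quasimode $v_j$ is a \emph{sum} of Gaussian-beam packets near $p_0$, and $u_1u_2u_3u_4$ contains many cross terms $v^{(j)}\overline{v^{(k)}}w^{(l)}\overline{w^{(m)}}$ with $j\neq k$ or $l\neq m$, as well as contributions localised at the other intersection points $p_1,\ldots,p_N$. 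Your stationary-phase conclusion ``$c\,\delta_{p_0}$'' then fails: the limit is a finite linear combination of Diracs at the $p_r$ with complex, $\lambda$-dependent coefficients, and one cannot directly read off $\hat f(2\lambda,p_0)$.

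The paper's proof is devoted almost entirely to this issue. It introduces an extra large parameter $L$ (using $u_1,u_2$ with frequency $L\tau$ along $\gamma$ and $u_3,u_4$ with frequency $\tau$ along $\eta$) so that the off-diagonal cross terms have nonstationary phase and die, and then separates the surviving diagonal contributions from the finitely many intersection points via a Paley--Wiener/analyticity lemma (Lemma~\ref{lemma_linear_combination_analytic}) applied to the identity $\sum_{r,j,l} c_{jjll}^{(r)}\hat f(2\lambda,p_r)e^{-2\lambda[L(t_j^{(r)}-s_l^{(r)})+s_l^{(r)}]}=0$. Your outline would become a complete proof only under the extra hypothesis that every point of $M_0$ lies on two nontangential non-self-intersecting geodesics with a unique common point; without it you need the $L$-parameter trick and the exponential-sum lemma.
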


Theorem \ref{thm for CTA} follows immediately from this proposition:

\begin{proof}[Proof of Theorem \ref{thm for CTA}]
	Let $\Lambda_{q_j}$ be the DN map for the equation $\Delta_g u + q u^m = 0$ in $M$. If $\Lambda_{q_1}(f) = \Lambda_{q_2}(f)$ for small $f$, then $(D^m \Lambda_{q_1})_0 = (D^m \Lambda_{q_2})_0$ and thus by Proposition \ref{derivs_and_integral_formula} one has 
	\[
	\int_M (q_1-q_2) v_1 \cdots v_{m+1} \,dV = 0
	\]
	where $v_j \in C^s(M)$ are harmonic functions in $M$. Since $m \geq 3$, it follows from Proposition \ref{Four_Gaussian_beams} that $q_1 = q_2$.
\end{proof}

Before we prove Proposition \ref{Four_Gaussian_beams} in the  general case, let us explain the main idea of the proof in a simplified setting. Consider the situation where each point $y_0 \in M_0$ on the transversal manifold $M_0$ is an intersection point of two distinct nontangential geodesics $\gamma,\eta$ (depending on $y_0 $), which have no other intersection points. Assume also that $\gamma$ and $\eta$ do not self-intersect. 

As in \cite[Section 2]{ferreira2013calderon}, we choose special harmonic functions $u_1,u_2,u_3$ and $u_4$ of the form
\begin{align*}
u_1 = e^{-(\tau+i\lambda) x_1} ( \widetilde{v}_{\tau+i\lambda}+ r_1), \qquad 
&u_2 = \overline{e^{(\tau+i\lambda) x_1} ( \widetilde{v}_{\tau+i\lambda}+ r_2)}, \\
u_3 = e^{-\tau x_1} ( \widetilde{w}_{\tau}+ r_3),  \qquad 
&u_4 = \overline{e^{\tau x_1} ( \widetilde{w}_{\tau}+ r_4)}.
\end{align*}
Here $\widetilde{v}_{\tau+i\lambda}$, $\widetilde{w}_{\tau}$ are Gaussian beam quasimodes on $M_0$ concentrating near $\gamma$ and $\eta$, respectively, as in \cite[Proposition 3.1]{ferreira2013calderon}, and the functions $r_\ell$, $1\leq \ell \leq 4$, are small correction terms. Here $x_1$ is the variable of the Euclidean direction.

Since $\widetilde{v}_{\tau+i\lambda}$ and $\widetilde{w}_{\tau}$ are supported near $\gamma$ and $\eta$, respectively, and since $\gamma$ and $\eta$ only intersect at $y_0$, the product $u_1 u_2 u_3 u_4$ concentrates near the point $y_0$. Thus by using these solutions $u_\ell$, $1\leq \ell\leq 4$, it follows by the assumption \eqref{integral id to zero on CTA} that 
\begin{align}\label{CTA some equa}
\int_{B_\delta(y_0)} \hat{f}(2\lambda, \cdot) e^{2i\tau \mathrm{Im}(\Phi+\Psi )} A \,dV_{g_0}=\mathcal{O}\left(\tau^{-R} \right),
\end{align}
where $R>0$ can be chosen arbitrarily large, $B_\delta(y_0)$ is a geodesic ball of radius $\delta>0$ in $M_0$, and $\hat{f}$ denotes the partial Fourier transform of $f$ with respect to $x_1$. Here $\Phi$ and $\Psi$ are the phase functions of the Gaussian beams corresponding to $\gamma$ and $\eta$, respectively, and $A$ is an amplitude function, which is nonzero at the intersection point $y_0\in M_0$.

The point here is the following. The Hessians of $\mathrm{Im}(\Phi)$ and $\mathrm{Im}(\Psi)$ at $y_0$ are positive definite in directions orthogonal to $\gamma$ and $\eta$, respectively. In fact, the Hessians of $\mathrm{Im}(\Phi)$ and $\mathrm{Im}(\Psi)$ are also nonnegative definite and their gradients vanish on $\gamma$ and $\eta$, respectively. Since $\gamma $ and $\eta $ intersect at $y_0 $, the sum $\mathrm{Im}(\Phi+\Psi)$ is positive definite at $y_0 $ and $d(\Phi+\Psi)(y_0 )=0$. By multiplying \eqref{CTA some equa} by $\tau^{1/2}$, it follows by taking $\tau \to \infty$ that 
\[
c\hat{f}(2\lambda,y_0 )=0,
\]
for some constant $c\neq 0$. Since $\lambda\in \R$ is arbitrary and $y_0 \in M_0$ is also arbitrary, this proves the proposition in this special case. For details we refer to the following proof.

To deal with a transversally anisotropic manifold, we need to consider the (in most cases rare) possibility where $\gamma$ and $\eta$ may intersect at many different points and may have self-intersections. This makes the proof much more technical, and will be achieved by introducing additional parameters in the above construction and by using the auxiliary Lemma \ref{lemma_linear_combination_analytic} below.

Now, we turn to prove Proposition \ref{Four_Gaussian_beams} in the general case.

\begin{proof}[Proof of Proposition \ref{Four_Gaussian_beams}]

	We do the proof in several steps.
	
	\vspace{10pt}

{\it Step 1. Preparation.} \\

\noindent 
	By taking $u_j = 1$ for $j \geq 5$, it is sufficient to prove the result when $m=4$. 
	%For simplicity, 
	We assume that $M \subset \subset \R \times M_0$ with $g = e \oplus g_0$. 
	%The general case $g = c(e \oplus g_0)$ for a positive scalar function $c$ is similar except that one needs to multiply $u_j$ and $f$ by suitable powers of $c$, as in \cite[Section 2]{ferreira2013calderon}. 
	The dimension of $M$ is denoted by $n$, and so $\dim(M_0)=n-1$.
	
	We may enlarge $M_0$ so that it has strictly convex boundary (first embed $M_0$ in some closed manifold $M_1$, remove a small geodesic ball from $M_1 \setminus M_0$ and glue a part with strictly convex boundary near the removed part). By \cite[Lemma 3.1]{salo2017normal}, there is a set $E$ of zero measure in $(M_0,g_0)$ so that all points in $M_0 \setminus E$ lie on some nontangential geodesic between boundary points. Fix a point $y_0 \in M_0 \setminus E$ and a direction $v_0 \in S_{y_0} M_0$ so that the geodesic $\gamma: [0,T] \to M_0$ through $(y_0,v_0)$ is a nontangential geodesic between boundary points. Without loss of generality, we may assume that $\gamma$ is a unit speed geodesic (i.e. $\abs{\dot{\gamma}}=1$). The property of a geodesic being nontangential is not changed under small perturbations. Therefore, we may find $w_0 \in S_{y_0} M_0$ close to $v_0$ so that $w_0 \neq v_0$ and the unit speed geodesic $\eta: [0,S] \to M_0$ through $(y_0, w_0)$ is also a nontangential geodesic between boundary points. We may arrange so that the geodesics $\gamma$ and $\eta$ are distinct and are not reverses of each other (in fact, $\gamma$ can only self-intersect at $y_0$ finitely many times \cite[Lemma 7.2]{kenigsalo_partial}, and it is enough choose $w_0$ near $v_0$ that is different from the corresponding finitely many tangent vectors of $\gamma$ and their negatives).

	We next show that two distinct geodesics $\gamma$ and $\eta$ that are not the reverse of each other can intersect only finitely many times.
	Assume the opposite, that there are infinitely many intersection points $\{p_k \}_{k\in \N}$ and intersection times $\{t_k\}_{k\in \N}$, $\{s_k\}_{k\in \N}$ satisfying 
	 \[
     \gamma(t_k)=p_k =\eta(s_k), \text{ for all }k\in \N.
	 \]
	Since $M$ is compact, $t_k\in [0,T]$ and $s_k\in [0,S]$, by passing to subsequences and using continuity of unit speed geodesics $\gamma,\eta$, we may assume that $\gamma(t_k)\to \gamma(t_0)= p_0$ and $\eta(s_k)\to \eta(s_0)= p_0$ as $k\to \infty$, for some $t_0 \in [0,T]$, $s_0 \in [0,S]$ and $p_0\in M$.
	
	In addition, we denote the tangent vectors $V_\gamma:=\dot{\gamma}(t_0)$ and $V_\eta:=\dot{\eta}(s_0)$. By using the continuity of $\dot{\gamma}(t)$, $\dot{\eta}(s)$ and the compactness of the unit sphere, we have (by passing to subsequences again) that 
	\[
	\lim_{k\to \infty}\dot{\gamma}(t_k)=V_\gamma \text{ and }\lim_{k\to \infty}\dot{\eta}(s_k)=V_\eta.
	\]
	Now, it is clear that $V_\gamma\neq \pm V_\eta$, by using the fact that $\gamma$ and $\eta$ are distinct and not reverses. The injectivity radius at $p_0$ is positive. However, since $\gamma$ and $\eta$ intersect in all geodesic balls $B_\eps(p_0)$ for any $\eps>0$, this is a contradiction. This shows that two different nontangential geodesics can only intersect finitely many times.

		\vspace{10pt}

{\it Step 2. Choice of the harmonic functions $u_j$.} \\

\noindent We denote the points of $\R\oplus M_0$ by $(x_1,x')$.
	We now use the argument in \cite[Proposition 2.2]{ferreira2013calderon} and choose solutions 
	\begin{align*}
		u_1 = e^{-L(\tau+i\lambda) x_1} ( \widetilde{v}_{L(\tau+i\lambda)}(x') + r_1), \qquad 
		u_2 = \overline{e^{L(\tau+i\lambda) x_1} ( \widetilde{v}_{L(\tau+i\lambda) }(x') + r_2)},
	\end{align*}
	of $\Delta_g u_j = 0$ in $M$, where $\tau \geq 1$ is sufficiently large, $L\geq 1$ is an additional large parameter that will be fixed later, and $\lambda \in \C$ is fixed, 
	\[
	\widetilde{v}_{L(\tau+i\lambda) } := \tau^{-\frac{n-2}{8}} v_{L(\tau+i\lambda) }(x')
	\]
	where $v_{L(\tau+i\lambda) }$ is the Gaussian beam quasimode concentrating near the geodesic $\gamma$ in $M_0$ constructed in \cite[Proposition 3.1]{ferreira2013calderon}, and $r_j$ are remainder terms satisfying 
	\[
	\norm{r_j}_{H^k(M)} = O(\tau^{-R})
	\]
	as $\tau \to \infty$ where $k, R > 0$ can be chosen arbitrarily large.
	
	There are three differences in the above construction compared with  \cite[Proposition 2.2]{ferreira2013calderon}. The first difference is that $\lambda$ is complex, but the construction of $v_{\tau+i\lambda}$ goes through without changes for complex $\lambda$. The second difference is the factor $\tau^{-\frac{n-2}{8}}$ in front of $v_{L(\tau+i\lambda) }$. The argument in \cite[Proposition 3.1]{ferreira2013calderon} implies that 
	\[
	\norm{v_{L(\tau+i\lambda) }}_{L^2(M_0)} = O(1), \qquad \norm{v_{L(\tau+i\lambda) }}_{L^4(M_0)} = O((L\tau)^{\frac{n-2}{8}}).
	\]
	Hence the main term in the solutions $u_1$ and $u_2$ is normalized so that 
	\[
	\norm{\widetilde{v}_{L(\tau+i\lambda) }}_{L^4(M_0)} = O(1).
	\]
	This normalization will be appropriate for dealing with products of four solutions. The third difference is the decay for the error terms $r_j$, which is better than the decay stated in \cite[Proposition 2.2]{ferreira2013calderon} and can be justified as follows. By the argument in \cite[Proposition 2.2]{ferreira2013calderon}, writing $s=\tau+i\lambda$, the function $r_1$ is obtained by solving the equation 
	\[
	e^{s x_1} \Delta_g (e^{-s x_1} r_1) = \tau^{-\frac{n-2}{8}} (-\Delta_{g_0} - s^2) v_s \text{ in $M$}.
	\]
	By using a Carleman estimate shifted to a negative semiclassical Sobolev space $H^{-k}_{\mathrm{scl}}$, with $h = \tau^{-1}$ as the semiclassical parameter, and using the corresponding solvability result in $H^k_{\mathrm{scl}}$ (see e.g.\ \cite[Lemma 4.3 and Proposition 4.4]{ferreira2009limiting}), there is a solution $r_1 \in H^k(M)$ with 
	\begin{equation} \label{r1_norm_estimate}
		\norm{r_1}_{H^k_{\mathrm{scl}}(M)} \leq \frac{C}{\tau} \norm{(-\Delta_{g_0} - s^2) v_s}_{H^k_{\mathrm{scl}}(M)}.
	\end{equation}
	By \cite[Proposition 3.1]{ferreira2013calderon}, one has $\norm{(-\Delta_{g_0} - s^2) v_s}_{L^2(M_0)} = O(\tau^{-K})$ for some arbitrarily large $K$. However, by looking at the formula for $f = (-\Delta_{g_0} - s^2) v_s$ given after \cite[equation (3.4)]{ferreira2013calderon} we see that taking $k$ derivatives of $f$ has the effect of bringing at most $k$ powers of $s$ to the front of the expression, or reducing the degree of vanishing of $h_j$ on $\Gamma$ by at most $k$. In effect this means that $\norm{(-\Delta_{g_0} - s^2) v_s}_{H^k_{\mathrm{scl}}(M_0)} = O(\tau^{-K})$, leading to 
	\begin{equation} \label{r1_right_hand_side_estimate}
		\norm{(-\Delta_{g_0} - s^2) v_s}_{H^k_{\mathrm{scl}}(M)} = O(\tau^{-K})
	\end{equation}
	where $K$ can be chosen arbitrarily large. The required decay 
	\[
	\norm{r_1}_{H^k(M)} \leq \tau^k \norm{r_1}_{H^k_{\mathrm{scl}}(M)} = O(\tau^{-R})
	\]
	follows by combining \eqref{r1_norm_estimate} and \eqref{r1_right_hand_side_estimate} after choosing $K$ large enough, and the same estimate for $r_2$ follows analogously. It follows from this discussion, after taking $k > n/2$ and using the Sobolev embedding $H^k(M) \subset L^{\infty}(M)$, that 
	\begin{equation} \label{u1_u2_estimate}
	u_1 u_2 = e^{-2i \re(L\lambda) x_1} \abs{\widetilde{v}_{L(\tau+i\lambda) }(x')}^2 + O_{L^2(M)}((L\tau)^{-R}).
	\end{equation}
	
	We now repeat the previous construction for the geodesic $\eta$ and choose solutions 
	\begin{align*}
		u_3 = e^{-(\tau+i\mu) x_1} ( \widetilde{w}_{\tau+i\mu}(x') + r_3), \qquad 
		u_4 = \overline{e^{(\tau+i\mu) x_1} ( \widetilde{w}_{\tau+i\mu}(x') + r_4)},
	\end{align*}
	where $\mu \in \mC$ is fixed, $\norm{r_j}_{L^2(M)} = O(\tau^{-R})$ as $\tau \to \infty$, and 
	\[
	\widetilde{w}_{\tau+i\mu} := \tau^{-\frac{n-2}{8}} w_{\tau+i\mu}(x')
	\]
	where $w_{\tau+i\mu}$ is the Gaussian beam quasimode concentrating near $\eta$ constructed in \cite[Proposition 3.1]{ferreira2013calderon} so that  
	\[
	\norm{\widetilde{w}_{\tau+i\mu}}_{L^4(M_0)} = O(1) \text{ as }\tau \to \infty.
	\]
	Similarly as for $u_1 u_2$, one has 
	\begin{equation} \label{u3_u4_estimate}
	u_3 u_4 = e^{-2i \re(\mu) x_1} \abs{\widetilde{w}_{\tau+i\mu}(x')}^2 + O_{L^2(M)}(\tau^{-R}).
	\end{equation}
	
	\vspace{10pt}

{\it Step 3. The integral of $f$ against $u_1 u_2 u_3 u_4$.} \\

\noindent 
By the assumption that $f$ integrates to zero against products of four harmonic functions, we have 
	\[
	\int_M f u_1 u_2 u_3 u_4 \,dV = 0.
	\]
	Using \eqref{u1_u2_estimate} and \eqref{u3_u4_estimate}, we have  
	\begin{align*}
	&\int_M f u_1 u_2 u_3 u_4 \,dV \\
	 = &\int_M f(x_1,x') e^{-2i \re(L\lambda+\mu)x_1} \abs{\widetilde{v}_{L(\tau+i\lambda) }(x')}^2 \abs{\widetilde{w}_{\tau+i\mu}(x')}^2 \,dV + O(\tau^{-R}).
	\end{align*}
	If we extend $f$ by zero to $\R \times M_0$ and denote the partial Fourier transform of $f$ in the $x_1$ variable by $\hat{f}(\lambda, x')$, the previous identity becomes 
	\[
	\int_M f u_1 u_2 u_3 u_4 \,dV = \int_{M_0} \hat{f}(2 \re(L\lambda+\mu),\,\cdot\,)  \abs{\widetilde{v}_{L(\tau+i\lambda) }}^2 \abs{\widetilde{w}_{\tau+i\mu}}^2 \,dV_{g_0} + O(\tau^{-R}).
	\]
	Note that $\widetilde{v}_{L(\tau+i\lambda) }$ and $\widetilde{w}_{\tau+i\mu}$ can be chosen to be supported in arbitrarily small but fixed neighborhoods of $\gamma$ and $\eta$, respectively. Thus if $p_1, \ldots, p_N$ are the distinct points in $M_0$ where $\gamma$ intersects $\eta$, then the last integral over $M_0$ is actually over $U_1 \cup \ldots \cup U_N$ where $U_r$ is a small neighborhood of $p_r$ in $M_0$. %For later purposes we choose $t_j \in [0,L]$ to be the smallest time for which $p_j = \gamma(t_j)$, and order the points $p_j$ so that 
	%\begin{equation}
	%t_1 < \ldots < t_N.
	%\end{equation}
	
	We will use the abbreviation 
	\[
	F(x') = F_{\re(L\lambda+\mu)}(x') := \hat{f}(2 \re(L\lambda+\mu),x').
	\]
	Note for later purposes that $F$ is independent of $\im(\lambda)$ and $\im(\mu)$ and that 
	\[
	\norm{F}_{C^1(M_0)} \lesssim \norm{f}_{C^1(M)}.
	\]
	Combining the above facts, we have 
	\begin{equation} \label{f_integral_limit_intermediate}
	\sum_{r=1}^N \tau^{\frac{1}{2}} \int_{U_r} F \abs{\widetilde{v}_{L(\tau+i\lambda) }}^2 \abs{\widetilde{w}_{\tau+i\mu}}^2 \,dV_{g_0} = o(1)
	\end{equation}
	as $\tau \to \infty$. It will be shown below that with the normalizing factor $\tau^{\frac{1}{2}}$, the left hand side has a nontrivial limit as $\tau \to \infty$.
	
	\vspace{10pt}

{\it Step 4. Analysis of the integrals in \eqref{f_integral_limit_intermediate}.} \\

\noindent 
	Fix now $p$ to be one of the intersection points $p_r$ and let $U = U_r$. We consider the integral 
	\begin{equation} \label{integral_intermediate}
	\int_{U} F \abs{\widetilde{v}_{L(\tau+i\lambda) }}^2 \abs{\widetilde{w}_{\tau+i\mu}}^2 \,dV_{g_0}.
	\end{equation}
	By the construction in \cite[Proposition 3.1]{ferreira2013calderon}, in $U$ the quasimode $\widetilde{v}_{\tau+i\lambda}$ is a finite sum 
	\[
	\widetilde{v}_{L(\tau+i\lambda) }|_U = v^{(1)} + \ldots + v^{(P)}
	\]
	where $t_1 < \ldots < t_P$ are the times in $[0,T]$ when $\gamma(t_j) = p$, each $v^{(j)}$ has the form 
	\[
	v^{(j)} = \tau^{\frac{n-2}{8}} e^{iL(\tau+i\lambda) \Phi_j} a_j
	\]
	where each $\Phi = \Phi_j$ is a smooth complex function in $U$ satisfying, for $t$ close to $t_j$,  
	\begin{align*}
		\Phi(\gamma(t)) = t, \quad \nabla \Phi(\gamma(t)) = \dot{\gamma}(t), \quad \mathrm{Im}(\nabla^2 \Phi(\gamma(t))) \geq 0, \quad \mathrm{Im}(\nabla^2 \Phi)|_{\dot{\gamma}(t)^{\perp}} > 0,
	\end{align*}
	and each $a_j$ is a smooth function in $U$ of the form 
	\[
	a_j(t,y) = (a_{0,j}(t,y) + O(\tau^{-1})) \chi(y/\delta')
	\]
	where $(t,y)$ are Fermi coordinates for $\gamma$ for $t$ close to $t_j$, $a_{0,j}(t,0)$ is a nonvanishing function independent of $\tau$ and $\lambda$, $\chi$ is a smooth cutoff function supported in the unit ball, and $\delta' > 0$ is a fixed number that can be taken very small. Note that as opposed to \cite{ferreira2013calderon}, there is no power of $\tau$ in the definition of $a_j$. By the argument in \cite[formula (3.5) and before]{ferreira2013calderon} one also has, as $\tau \to \infty$, 
	\begin{equation} \label{vj_bounds}
	\norm{v^{(j)}}_{L^4(U)} = O(1), \qquad \norm{v^{(j)}}_{L^4(U \cap \p M_0)} = O(1).
	\end{equation}
	
	In a similar way, $\widetilde{w}_{\tau+i\mu}$ in $U$ is a finite sum 
	\[
	\widetilde{w}_{\tau+i\mu}|_U = w^{(1)} + \ldots + w^{(Q)}
	\]
	where $s_1 < \ldots < s_Q$ are the times in $[0,S]$ when $\eta(s_k) = p$, each $w^{(k)}$ has the form 
	\[
	w^{(k)} = \tau^{\frac{n-2}{8}} e^{i(\tau+i\mu) \Psi_k} b_k
	\]
	where each $\Psi = \Psi_k$ satisfies, for $s$ close to $t_k$,  
	\begin{align*}
		\Psi(\eta(s)) = s, \quad \nabla \Psi(\eta(s)) = \dot{\eta}(s), \quad \mathrm{Im}(\nabla^2 \Psi(\eta(s)) \geq 0, \quad \mathrm{Im}(\nabla^2 \Psi)|_{\dot{\eta}(s)^{\perp}} > 0,
	\end{align*}
	and $b_j$ has a similar form as $a_j$ but is supported near $\eta$.
	
	Inserting the formulas for $\widetilde{v}_{L(\tau+i\lambda) }$ and $\widetilde{w}_{\tau+i\mu}$ in \eqref{integral_intermediate} yields that 
	\begin{equation} \label{integral_ijklm_sum}
	\tau^{\frac{1}{2}} \int_{U} F \abs{\widetilde{v}_{L(\tau+i\lambda) }}^2 \abs{\widetilde{w}_{\tau+i\mu}}^2 \,dV_{g_0} = \sum_{j,k,l,m} I_{jklm}
	\end{equation}
	where 
	\begin{align*}
	I_{jklm} &= \tau^{\frac{1}{2}} \int_{U} F v^{(j)} \ol{v^{(k)}} w^{(l)} \ol{w^{(m)}} \,dV_{g_0} \\
	 &= \tau^{\frac{n-1}{2}} \int_{U} e^{i\tau \Xi_{jklm}} A_{jklm} F \,dV_{g_0}
	\end{align*}
	where 
	\begin{align*}
	\Xi_{jklm} &:=L \Phi_j-L\ol{\Phi}_k+\Psi_l-\ol{\Psi}_m, \\
	A_{jklm} &:= e^{-L\lambda \Phi_j-L\ol{\lambda \Phi_k}} e^{-\mu \Psi_l - \ol{\mu \Psi_m}} a_j \ol{a}_k b_l \ol{b}_m.
	\end{align*}
	We will next analyze the integrals $I_{jklm}$ and show that the only nontrivial contributions as $\tau \to \infty$ come from the terms where $d\Xi_{jklm}(p) = 0$. After this, we will fix the parameters so that $d\Xi_{jklm}(p) = 0$ will happen only when $j=k$ and $l=m$.
	
	\vspace{10pt}
	
{\it Step 5. Evaluation of $I_{jklm}$ when $d\Xi_{jklm}(p) = 0$.} \\

\noindent 
	Let $j,k,l,m$ be such that $\Xi = \Xi_{jklm}$ satisfies $d\Xi(p) = 0$, and write 
	\[
	B := F A_{jklm}.
	\]
	Writing $z$ for the geodesic normal coordinates in $(M_0,g_0)$ with origin at $p$, the phase function $\Xi$ has the Taylor expansion 
	\[
	\Xi(z) = \Xi(0) + \frac{1}{2} H z \cdot z + O(\abs{z}^3).
	\]
	Here $\Xi(0) = L(t_j - t_k) + s_l - s_m$ and $H = H_{jklm} = (\p_{z_a z_b} \Xi)_{a,b}$ is the Hessian of $\Xi$ in the $z$ coordinates. Note that the imaginary parts of Hessians of $\Phi_j, \Phi_k, \Psi_l, \Psi_m$ at $p$ are all positive semidefinite. Moreover, they are positive definite in the codimension one subspaces $\dot{\gamma}(t_j)^{\perp}, \dot{\gamma}(t_k)^{\perp}, \dot{\eta}(s_l)^{\perp}, \dot{\eta}(s_m)^{\perp}$, respectively. Thus it follows that $\im(H) = \im(\nabla^2(L (\Phi_j+\Phi_k)+\Psi_l+\Psi_m))|_p$ is positive semidefinite, and moreover it is positive definite since the above codimension one subspaces span the whole tangent space at $p$. The last fact holds since $\dot{\gamma}(t_j) \neq \pm \dot{\eta}(s_l)$, which follows because the geodesics $\gamma$ and $\eta$ are distinct and one is not the reverse of the other. Finally, since $\im(H)$ is positive definite, by choosing $U$ small one has $\abs{z} \leq \delta$ in $U$ where $\delta$ is very small, which implies that 
	\[
	\abs{e^{i\tau(\frac{1}{2} H z \cdot z + O(\abs{z}^3))}} \leq e^{-c \tau \abs{z}^2}
	\]
	for some $c > 0$. This shows that one may indeed use dominated convergence in the argument below.
	
	One has 
	\begin{align*}
	I_{jklm} = &\tau^{\frac{n-1}{2}} \int_U e^{i\tau \Xi} B \,dV_{g_0} \\
	=& \tau^{\frac{n-1}{2}} e^{i\tau \Xi(0)} \int_{\mR^{n-1}} e^{i\tau (\frac{1}{2} H z \cdot z + O(\abs{z}^3))} B(z) \abs{g_0(z)}^{1/2} \,dz.
	\end{align*}
	We will change variables $z \to \tau^{-1/2} z$, which brings a Jacobian $\tau^{-\frac{n-1}{2}}$ that cancels the power of $\tau$ in front.
	Note that one has $\abs{g_0(z/\tau^{1/2})} \to 1$ and 
	\[
	B(z/\tau^{1/2}) \to F(p) e^{-L \lambda t_j - L \bar{\lambda} t_k} e^{-\mu s_l - \bar{\mu} s_m} a_j(p) \ol{a_k(p)} b_l(p) \ol{b_m(p)}
	\]
	as $\tau \to \infty$. Combining these facts and using dominated convergence gives that 
	\[
	I_{jklm} = e^{i\tau(L(t_j - t_k) + s_l - s_m)}  c_{jklm} F(p) e^{-L \lambda t_j - L \bar{\lambda} t_k} e^{-\mu s_l - \bar{\mu} s_m} + o(1)
	\]
	where 
	\[
	c_{jklm} = a_j(p) \ol{a_k(p)} b_l(p) \ol{b_m(p)} \int_{\mR^{n-1}} e^{\frac{i}{2} H_{jklm} z \cdot z} \,dz.
	\]
	The last integral is finite since $\im(H_{jklm})$ is positive definite. For later purposes we observe that $H_{jjll}$ is purely imaginary, hence 
	\[
	c_{jjll} = \abs{a_j(p)}^2 \abs{b_l(p)}^2 \int_{\mR^{n-1}} e^{-\frac{1}{2}\im(H_{jjll}) z \cdot z} \,dz
	\]
	where the last integral is positive. In particular, we have 
	\[
	I_{jjll} = c_{jjll} F(p) e^{-2L\re(\lambda) t_j} e^{-2\re(\mu) s_l} + o(1)
	\]
	where $c_{jjll} > 0$.
	
	\vspace{10pt}

{\it Step 6. Evaluation of $I_{jklm}$ when $d\Xi_{jklm}(p) \neq 0$.} \\

\noindent 
	Write $\varphi = \re(\Xi_{jklm})$. Since $d\Phi_j(p), d\Psi_l(p)$ etc are real, we have $d\varphi(p) \neq 0$, and $I_{jklm}$ may be written as 
	\[
	I_{jklm} = \tau^{\frac{1}{2}} \int_{U} e^{i\tau \varphi} F \breve{v}^{(j)} \ol{\breve{v}^{(k)}} \breve{w}^{(l)} \ol{\breve{w}^{(m)}} \,dV_{g_0}
	\]
	where 
	\[
	\breve{v}^{(j)} = \tau^{\frac{n-2}{8}} e^{-L \tau \im(\Phi_j) - L\lambda \Phi_j} a_j, \qquad \breve{w}^{(l)} = \tau^{\frac{n-2}{8}} e^{-\tau \im(\Psi_l) -\mu \Psi_l} b_l.
	\]
	We wish to use a non-stationary phase argument as in \cite[end of proof Proposition 3.1]{ferreira2013calderon}. Write 
	\[
	e^{i\tau \varphi} = \frac{1}{i\tau} P(e^{i\tau\varphi}), \qquad Pw = \langle \abs{d\varphi}^{-2} d\varphi, dw \rangle,
	\]
	where we assume that $U$ has been chosen so small that $d\varphi$ is nonvanishing in $U$. Since $F$ is $C^1$, we may integrate by parts to obtain 
	\begin{align}
	I_{jklm} = &\notag  \frac{1}{i\tau^{\frac{1}{2}}} \int_{U} e^{i\tau \varphi} P^t \left[ F \breve{v}^{(j)} \ol{\breve{v}^{(k)}} \breve{w}^{(l)} \ol{\breve{w}^{(m)}} \right] \,dV_{g_0} \\
	 & \quad+ \frac{1}{i\tau^{\frac{1}{2}}} \int_{U \cap \p M} \frac{\p_{\nu} \varphi}{\abs{d\varphi}^2} e^{i\tau \varphi} F \breve{v}^{(j)} \ol{\breve{v}^{(k)}} \breve{w}^{(l)} \ol{\breve{w}^{(m)}} \,dS \label{ijklm_nonstationary}
	\end{align}
	where the boundary term only appears if $p \in \p M$.
	
	The boundary term in \eqref{ijklm_nonstationary} goes to zero as $\tau \to \infty$ since $\norm{\breve{v}^{(j)}}_{L^4(U \cap \p M)} \lesssim 1$ by \eqref{vj_bounds} etc. In the integral over $U$, if the derivative hits $F$ one can estimate 
	\begin{align*}
	&\abs{\frac{1}{i\tau^{\frac{1}{2}}} \int_{U} e^{i\tau \varphi} (P^t F) \breve{v}^{(j)} \ol{\breve{v}^{(k)}} \breve{w}^{(l)} \ol{\breve{w}^{(m)}} \,dV_{g_0}} \\
	& \qquad \lesssim \tau^{-1/2} \norm{F}_{C^1(M)} \norm{\breve{v}^{(j)} \ol{\breve{v}^{(k)}} \breve{w}^{(l)} \ol{\breve{w}^{(m)}}}_{L^1(M_0)}
	\end{align*}
	which goes to zero as $\tau \to \infty$ since $\norm{\breve{v}^{(j)}}_{L^4(M_0)} \lesssim 1$ etc. The worst behaviour in $\tau$ in the integral over $U$ occurs when a derivative hits $e^{-L \tau \im(\Phi_j)}$ or $e^{-\tau \im(\Psi_l)}$ etc. This brings a factor like $\tau d(\im(\Phi_j))$ into the integrand, and since $d(\im(\Phi_j))$ vanishes on $\gamma$ one can choose new coordinates $z = (z', z_{n-1})$ near $0$ so that $\abs{d(\im(\Phi_j))} \lesssim \abs{z'}$. Thus the integral that one needs to estimate looks like 
	\[
	\frac{1}{i\tau^{\frac{1}{2}}} \int_{U} e^{i\tau \varphi} F \left[ \tau d(\im(\Phi_j)) \right] \breve{v}^{(j)} \ol{\breve{v}^{(k)}} \breve{w}^{(l)} \ol{\breve{w}^{(m)}} \,dV_{g_0}.
	\]
	Evaluating this integral as in Step 5, and using that the change of variables $z \to \tau^{-1/2} z$ together with $\abs{d(\im(\Phi_j))} \lesssim \abs{z'}$ brings an additional factor $\tau^{-1/2}$, shows that this kind of integral is $O(\tau^{-1/2})$. %However, decomposing $F \in C^1(M_0)$ into a smooth part and a small part and repeating the integration by parts argument for the smooth part shows that the integral is actually $o(1)$ as $\tau \to \infty$ (since in the worst terms one now has a factor like $\abs{d(\im(\Phi_j))}^2 \lesssim \abs{z'}^2$), just as in the proof of the Riemann-Lebesgue lemma. 
	This concludes the proof that 
	\begin{align*}\label{limit tends to zero of I jklm}
	d\Xi_{jklm}(p) \neq 0 \implies \lim_{\tau \to \infty} I_{jklm} = 0.
	\end{align*}
	
	\vspace{10pt}

{\it Step 7. Evaluation of \eqref{f_integral_limit_intermediate}.} \\

\noindent 
	Recall from Step 3 that $p_1, \ldots, p_N$ were the distinct intersection points of $\gamma$ and $\eta$ and that $U_r$ were small neighborhoods of $p_r$. As in Step 4, for each $r$ with $1 \leq r \leq N$ let $t_1^{(r)} < \ldots < t_{P_r}^{(r)}$ be the times in $[0,T]$ when $\gamma(t_j^{(r)}) = p_r$, and let $s_1^{(r)} < \ldots < s_{Q_r}^{(r)}$ be the times in $[0,S]$ when $\eta(s_j^{(r)}) = p_r$.
	
	Going back to \eqref{f_integral_limit_intermediate} and using \eqref{integral_ijklm_sum}, we have 
	\[
	\sum_{r=1}^N \sum_{j,k=1}^{P_r} \sum_{l,m=1}^{Q_r} I_{jklm}^{(r)} = o(1)
	\]
	as $\tau \to \infty$, where 
	\[
	I_{jklm}^{(r)} = \tau^{\frac{n-1}{2}} \int_{U_r} e^{i\tau \Xi_{jklm}^{(r)}} A_{jklm}^{(r)} F \,dV_{g_0}
	\]
	where $\Xi_{jklm}^{(r)}$ and $A_{jklm}^{(r)}$ are defined in $U_r$ as in Step 4.
	
	The integrals $I_{jklm}^{(r)}$ were evaluated in Steps 5 and 6. If we define 
	\[
	c_{jklm}^{(r)} := \left\{ \begin{array}{cl} a_j^{(r)}(p_r) \ol{a_k^{(r)}(p_r)} b_l^{(r)}(p_r) \ol{b_m^{(r)}(p_r)} \int_{\mR^{n-1}} e^{\frac{i}{2} H_{jklm}^{(r)} z \cdot z} \,dz, & d\Xi_{jklm}^{(r)}(p_r) = 0, \\ 0, & \text{otherwise}, \end{array} \right.
	\]
	then we get from Steps 5 and 6 that 
	\begin{align}\label{total sum....}
	\sum_{r=1}^N \sum_{j,k=1}^{P_r} \sum_{l,m=1}^{Q_r} e^{i\tau \left[ L( t_j^{(r)} - t_k^{(r)} )+ s_l^{(r)} - s_m^{(r)} \right]}  c_{jklm}^{(r)} F(p_r) e^{-L(\lambda t_j^{(r)} + \bar{\lambda} t_k^{(r)})} e^{-\mu s_l^{(r)} - \bar{\mu} s_m^{(r)}} = o(1)
	\end{align}
	as $\tau \to \infty$. In the above formula $F(p_r) = F_{\re(\lambda+\mu)}(p_r)$.
	
	\vspace{10pt}

	{\it Step 7. Choosing $L$ so that $d\Xi_{jklm}^{(r)}(p_r) = 0$ only when $j=k$ and $l=m$.} \\
	
	%Recall from Step 3 that $p_1, \ldots, p_N$ were the distinct intersection points of $\gamma$ and $\eta$ and that $U_r$ were small neighborhoods of $p_r$. As in Step 4, for each $r$ with $1 \leq r \leq N$ let $t_1^{(r)} < \ldots < t_{P_r}^{(r)}$ be the times in $[0,L]$ when $\gamma(t_j^{(r)}) = p_r$, and let $s_1^{(r)} < \ldots < s_{Q_r}^{(r)}$ be the times in $[0,T]$ when $\eta(s_j^{(r)}) = p_r$.
	
	Next, we choose $L\in \N$ large enough, but fixed, so that 
	\[
	d\Xi_{jklm}(p_r) \neq 0
	\]
	for all $1\leq r\leq N$ unless $j=k$ and $l=m$. We have
	\begin{align*}
	\nabla \Xi_{jklm}(p_r)&=L\nabla \Phi_j-L\nabla\ol{\Phi}_k+\nabla\Psi_l-\nabla\ol{\Psi}_m \\ &=L(\dot{\gamma}(t_j^{(r)})-\dot{\gamma}(t_k^{(r)}))+\dot{\eta}(s_l^{(r)})-\dot{\eta}(s_m^{(r)}).
	\end{align*}
	Since the geodesic $\gamma$ is transversal and thus it is not closed geodesic, we have
	\[
	\dot{\gamma}(t_j^{(r)})-\dot{\gamma}(t_k^{(r)})\neq 0
	\]
	for all $j\neq k$ for all $r$ with $1\leq r\leq N$.  Let 
	\[
	\alpha=\min\left\{ \abs{\dot{\gamma}(t_j^{(r)})-\dot{\gamma}(t_k^{(r)})}:\ 1\leq r\leq N,\  1\leq j,k\leq P_r \right\} >0
	\]
	and
	\[
	\beta=\max\left\{\abs{\dot{\eta}(s_l^{(r)})-\dot{\eta}(s_m^{(r)})}:\ 1\leq r\leq N, \  1\leq l,m \leq Q_r  \right\}.
	\]
	We choose $$L \geq \frac{\beta+1}{\alpha},$$ then we have 
	\begin{align*}
	\abs{L(\dot{\gamma}(t_j^{(r)})-\dot{\gamma}(t_k^{(r)}))+\dot{\eta}(s_l^{(r)})-\dot{\eta}(s_m^{(r)})} &\geq  \abs{L(\dot{\gamma}(t_j)-\dot{\gamma}(t_k))}-\abs{\dot{\eta}(s_l)-\dot{\eta}(s_m)}\\
	 &\geq L \alpha  -\beta \geq 1>0.
	\end{align*}
	Thus we have found that for any $L\geq \frac{\beta+1}{\alpha}$, one has 
	\[
	d\Xi_{jklm}(p_r) \neq 0
	\]
	if $j\neq k$. 
	
	For $1\leq r \leq N$, assume then that $j=k$ and $l\neq m$. Then we have
	\[
	\nabla \Xi_{jklm}(p_r)=L(\dot{\gamma}(t_j^{(r)})-\dot{\gamma}(t_j^{(r)}))+\dot{\eta}(s_l^{(r)})-\dot{\eta}(s_m^{(r)})\neq 0,
	\]
	since $\eta$ is transversal and $l\neq m$.
	
	In conclusion, the only case when $d\Xi _{jklm}^{(r)}(p_r)=0$ is $j=k$ and $l=m$.

	\vspace{10pt}

{\it Step 8. Conclusion of the proof.} \\

\noindent 
	Going back to \eqref{total sum....} and using the result in Step 7, and taking $\tau \to \infty$, we have 
	\begin{align}\label{total sum =0}
	\sum_{r=1}^N \sum_{j=1}^{P_r} \sum_{l=1}^{Q_r}  c_{jjll}^{(r)} F_{\re(L\lambda+\mu)}(p_r) e^{-L t_j^{(r)} (\lambda+ \bar{\lambda})} e^{s_l^{(r)} (-\mu - \bar{\mu}) } =  0,
	\end{align}
	where we have used that $e^{i\tau \left[ L( t_j^{(r)} - t_k^{(r)} )+ s_l^{(r)} - s_m^{(r)} \right]}=1$, when $j=k$ and $l=m$.
	
Let $\lambda \in \R$ and choose $\mu$ so that 
\[
2L\lambda +2\mu =2\lambda,
\]
which is equivalent to  
\[
\mu=(1-L)\lambda \in \R.
\]
Then \eqref{total sum =0} reads 
\begin{equation} \label{sum_l_lambda}
\sum_{r=1}^N \sum_{j=1}^{P_r} \sum_{l=1}^{Q_r}  c_{jjll}^{(r)} F_{\lambda}(p_r) e^{-2\lambda \left[L \left(t_j^{(r)}-s_l^{(r)}\right) +s_l^{(r)}\right] } = 0,
\end{equation}

We define two sets of real numbers as follows:
\[
\mathcal{Q}_1=\cup_{r,r'=1}^N\cup_{j,k=1}^{P_r}\left\{t_j^{(r)}-t_k^{(r')}\right\},  \quad \quad \mathcal{Q}_2=\cup_{r,r'=1}^N\cup_{l,m=1}^{Q_r}\left\{s_l^{(r)}-s_m^{(r')}\right\},
\]
and 
\[
\widetilde \alpha=\min_{d_1\in \mathcal{Q}_1, d_2\in\mathcal{Q}_2, d_1\neq d_2}\abs{d_1-d_2}, \quad \quad \widetilde \beta =\max_{d\in\mathcal{Q}_2}\abs{d}.
\]
Finally, we redefine the number $L$ in Step 7 as 
\[
L=\max \left\{\frac{\widetilde \beta}{\widetilde \alpha}+1, \frac{\beta+1}{\alpha} \right\},
\]
where $\alpha$, $\beta$ are the numbers given in Step 7.

Let $(r_1,j_1,l_1)\neq (r_2,j_2,l_2)$, then we want to show that 
\begin{equation}\label{not_same}
L(t_{j_1}^{(r_1)}-s_{l_1}^{(r_1)})+s_{l_1}^{(r_1)} \neq L(t_{j_2}^{(r_2)}-s_{l_2}^{(r_2)})+s_{l_2}^{(r_2)}.
\end{equation}
Let us set 
\[
d_1=t_{j_1}^{(r_1)}-t_{j_2}^{(r_2)} \text{ and } d_2=s_{l_1}^{(r_1)}-s_{l_2}^{(r_2)}.
\]
We have following cases:

\begin{itemize}
	\item[(a)] For the case $d_1=d_2$. Suppose that 
	\begin{equation}\label{the_same}
	\L(t_{j_1}^{(r_1)}-s_{l_1}^{(r_1)})+s_{l_1}^{(r_1)} = L(t_{j_2}^{(r_2)}-s_{l_2}^{(r_2)})+s_{l_2}^{(r_2)},
	\end{equation}
	then we have $L(d_1-d_2)+d_2=0$. 
	It follows that $s_{l_1}^{(r_1)}=s_{l_2}^{(r_2)}$. Thus $l_1=l_2$ and $r_1=r_2$. Since $d_1=d_2$, we also have $t_{j_1}^{(r_1)}=t_{j_2}^{(r_2)}$. Thus, $j_1=j_2$ holds, which leads to a contradiction to $(r_1, j_1,l_1)\neq (r_2,j_2,l_2)$. Thus we must have~\eqref{not_same}.
	
	\item[(b)] For the case $d_1\neq d_2$. If ~\eqref{the_same} holds, we have
	\[
	L=\frac{d_2}{d_1-d_2}.
	\]
	However, this cannot be true since
	\[
	L=\frac{\widetilde \beta}{\widetilde \alpha}+1>\frac{d_2}{d_1-d_2}.
	\]
	Thus again we have~\eqref{not_same}.
\end{itemize}
We now go back to \eqref{sum_l_lambda} and use Lemma~\ref{lemma_linear_combination_analytic} below together with the condition \eqref{not_same} and the fact that $F_{\lambda}(p_r) = \hat{f}(2\lambda,p_r)$. This yields that 
\[
c_{jjll}^{(r)} \hat{f}(2\lambda,p_r) = 0
\]
for all $(r, j, l)$. In Step 5 we proved that $c_{jjll}^{(r)} > 0$ for all $(r,j,l)$, showing that $\hat{f}(2\lambda,p_r) = 0$ for all $\lambda \in \mR$ and all $r$. Since the point $y_0$ in Step 1 was one of the $p_r$, it follows that $f(x_1,y_0) = 0$ for all $x_1 \in \mR$. By Step 1 this is true for almost every $y_0$ in $(M_0,g_0)$, and by the continuity of $f$ one gets that $f \equiv 0$ as required.
\end{proof}

\begin{lem} \label{lemma_linear_combination_analytic}
	Let $f_1, \ldots, f_N$ be compactly supported distributions in $\R$ such that for some distinct real numbers $a_1, \ldots, a_N$ one has 
	\[
	\sum_{j=1}^N \hat{f}_j(\lambda) e^{a_j \lambda} = 0, \qquad \lambda \in \R.
	\]
	Then $f_1 = \ldots = f_N = 0$.
\end{lem}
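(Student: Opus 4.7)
My plan is to proceed by induction on $N$; the base case $N=1$ is trivial since $e^{a_1\lambda}$ is nowhere zero. For the inductive step, I would first invoke the Paley-Wiener-Schwartz theorem to extend each $\hat f_j$ to an entire function on $\C$ that is polynomially bounded along $\R$, so that the given identity extends by analytic continuation to all complex $\lambda$. Relabeling so that $a_1 < \cdots < a_N$ and multiplying by $e^{-a_N\lambda}$ isolates the term with the largest exponent:
\[
\hat f_N(\lambda) = -\sum_{j<N}\hat f_j(\lambda)\,e^{(a_j-a_N)\lambda}, \qquad \lambda \in \C.
\]
Since each $a_j - a_N < 0$ and each $\hat f_j$ is polynomially bounded on $\R$, restricting to $\lambda > 0$ and letting $\lambda \to +\infty$ yields $|\hat f_N(\lambda)| \leq C(1+\lambda)^M e^{-c\lambda}$ with $c = a_N - a_{N-1} > 0$.

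The main new ingredient I would then establish is the sublemma: a compactly supported distribution $h \in \mathcal{E}'(\R)$ whose Fourier transform decays exponentially on $\R^+$ must vanish. To prove it, write $h = \frac{1}{2\pi}(H^+ + H^-)$, where $H^\pm(z) := \int_0^{\pm\infty}\hat h(\lambda)\,e^{iz\lambda}\,d\lambda$ are the boundary values of holomorphic functions on the upper and lower half planes respectively. The decay $|\hat h(\lambda)| \leq Ce^{-c\lambda}$ on $\R^+$ allows $H^+$ to be extended holomorphically across $\R$ into the strip $\{\im z > -c\}$ by shifting the integration contour downward. Since $h$ has compact support, $H^+ + H^- = 0$ on $\{|x| > R\}$ for some $R$; an edge-of-the-wedge / Schwarz reflection argument then forces $H^+ = -H^-$ on the entire strip $\{-c < \im z < 0\}$, and passing to the real boundary yields $h = 0$ on all of $\R$.

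Applying the sublemma to $h = f_N$ gives $f_N = 0$, after which the remaining identity involves only $N-1$ terms and the inductive hypothesis finishes the proof. The main obstacle I anticipate is the sublemma itself: one has to justify the contour-shift extension of $H^+$ in the presence of the polynomial factor in the Paley-Wiener-Schwartz bound, and then check carefully that the edge-of-the-wedge identification $H^+ = -H^-$ holds at the distributional level (not merely pointwise on the exterior $\{|x|>R\}$), so that taking distributional boundary values on $\R$ is valid.
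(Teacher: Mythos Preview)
Your proposal is correct and follows essentially the same route as the paper: isolate the term with extremal exponent to get one-sided exponential decay of its Fourier transform, then show via an analytic-continuation/boundary-value argument that no nontrivial compactly supported distribution can have this decay, and iterate. The paper's holomorphic function $U$ in the strip (built via the Fourier--Laplace representation, H\"ormander Theorems 7.4.2--7.4.3) plays the role of your $H^+$, and the paper's appeal to H\"ormander Theorem 3.1.15 (vanishing boundary value on an interval forces vanishing of the analytic function) is exactly your edge-of-the-wedge step---so your anticipated obstacles are precisely the points where the paper invokes those references.
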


\begin{proof}
	Suppose without loss of generality that $a_1 > a_2 > \ldots > a_N$. Then 
	\[
	\hat{f}_1(\lambda) = -\sum_{j=2}^N e^{-(a_1-a_j) \lambda} \hat{f}_j(\lambda).
	\]
	By the Paley-Wiener-Schwartz theorem there are $C, M > 0$ so that 
	\[
	\abs{\hat{f}_j(\lambda)} \leq C (1+\abs{\lambda})^M, \qquad \lambda \in \R.
	\]
	Write $\delta = a_1-a_2 > 0$. Since $a_1-a_j \geq \delta$ for $j \geq 2$, it follows that 
	\begin{align*}
		\abs{\hat{f}_1(\lambda)} \leq \begin{cases}
			C(1+|\lambda|)^M, \ &\lambda\leq 0,\\
			C(1+\lambda)^M e^{-\delta \lambda}, \ &\lambda\geq 0. 
		\end{cases}
	\end{align*}

	However, no nontrivial compactly supported distribution $f_1$ can have the above decay for its Fourier transform. To see this, note that 
	\[
	e^{\eps \lambda} \hat{f}_1(\lambda) \in \mathscr{S}'(\R), \qquad 0 \leq \eps < \delta.
	\]
	Thus using \cite[Theorem 7.4.2]{hormander1983analysis} there exists an analytic function $U$ in $\{0 < \im(t) < \delta \}$ so that the Fourier transform of $e^{\eps \lambda} \hat{f}_1(\lambda)$ is $U(\,\cdot\,+i\eps)$. By \cite[Remark after Theorem 7.4.3]{hormander1983analysis} the limit of $U(\,\cdot\,+i\eps)$ in $\mathscr{S}'(\R)$ as $\eps \to 0$ is the Fourier transform of $\hat{f}_1(\lambda)$, i.e.\ $2\pi f_1(-\,\cdot\,)$. Fix some interval $I \subset \R$ that is outside the support of $f_1(-\,\cdot\,)$, and consider the rectangle $Z = I \times (0,\delta)$. For $\eps$ close to $0$, one has 
	\[
	\abs{U(t+i\eps)} \leq \norm{e^{\eps \lambda} \hat{f_1}(\lambda)}_{L^1} \lesssim \norm{(1+\abs{\lambda})^M e^{\eps \lambda}}_{L^1(\R_-)} + 1 \lesssim \eps^{-M}.
	\]
	Since the limit of $U(\,\cdot\,+i\eps)$ in $\mathscr{D}'(I)$ is $2\pi f_1(-\,\cdot\,)|_I = 0$, by \cite[Theorem 3.1.15]{hormander1983analysis} one has $U = 0$ in $Z$. Now $U$ is analytic, so $U = 0$ in $\{0 < \im(t) < \delta \}$ and $f_1 = 0$. Repeating this argument gives that $f_2 = \ldots = f_N = 0$.
\end{proof}

\bibliographystyle{alpha}
\bibliography{ref}

\end{document}